\documentclass{sig-alternate-arxiv}

\usepackage{amssymb}
\usepackage[footnotesize,it]{caption}

\newif\ifwithexperiments
\withexperimentsfalse

\numberwithin{equation}{section}
\numberwithin{figure}{section}
\usepackage{microtype}
\usepackage{amssymb,amsmath}
\setcounter{tocdepth}{3}
\usepackage{graphicx}
\usepackage{rotating}
\usepackage{mathptmx}

\usepackage{epsfig}
\usepackage{mathbbol}
\usepackage{algorithmic}
\usepackage{url}
\usepackage{wrapfig}
\usepackage{algorithm}

\newcommand{\floor}[1]{\lfloor #1 \rfloor}
\newcommand{\ceil}[1]{\lceil #1 \rceil}

\newcommand{\donotshow}[1]{}

\newcommand{\ignore}[1]{}

\newtheorem{theorem}{Theorem}
\newtheorem{lemma}[theorem]{Lemma}

\newtheorem{corollary}[theorem]{Corollary}
\newtheorem{definition}{Definition}
\providecommand{\qed}{\rule[-0.2ex]{0.3em}{1.4ex}}

%\newtheorem{fact}{Fact}
%\newtheorem{claim}{Claim}
%\newtheorem{definition}{Definition}
%\newtheorem{proof}{\par\vspace{0.5ex}\noindent{\bf Proof:}\hspace{0.5em}}%
%               {\nopagebreak%
%                \strut\nopagebreak%
%                \hspace{\fill}\qed\par\medskip\noindent}

%\newcommand{\cp}{\mbox{$\:cp$}}
%\newcommand{\mac}{\mbox{$\:mac$}}
%\newcommand{\cmp}{\mbox{$\:cmp$}}
%\newcommand{\ops}{\mbox{$\:ops$}}

%\newcommand{\CC}{C\raisebox{.08ex}{\hbox{\tt ++}}}

%\newcommand{\R}{I\! \!R}
%\newcommand{\N}{I\! \!N}
%\newcommand{\Z}{I\! \!Z}
%\providecommand{\R}{\mathbb{R}}

\providecommand{\R}{\mathbb{R}}
\newcommand{\N}{\mathbb{N}}
\newcommand{\Z}{\mathbb{Z}}
\newcommand{\Q}{\mathbb{Q}}
\newcommand{\C}{\mathbb{C}}

%Basicdatatypes/papermacros.tex

%\newcommand{\ck}{\discretionary{k-}{k}{ck}}

%\newcommand{\abs}[1]{| #1 |}

%\newcommand{\Litem}[2]{$\langle$#1,#2$\rangle$\ }

%\newcommand{\range}[2]{[#1 \, \ldots \, #2]}

%\newcommand{\halfrange}[2]{[#1 \, \ldots \, #2)}

%\newcommand{\precond}{\\ {\em precondition}: }

\newlength{\setspacing}
\setlength{\setspacing}{0.05em}

\providecommand{\set}[2]{ \left\{\, #1 \mbox{{\rm ; }} #2 \, \right\}  }

\newcommand{\mbegin}{\{\ \ }
\newcommand{\mend}{\}}

\newlength{\mleftindent}
\setlength{\mleftindent}{\parindent}
\newlength{\mindent}
\settowidth{\mindent}{\mbegin}
\newlength{\mboxwidth}
\newcommand{\mincrement}{\addtolength{\mboxwidth}{-\mindent}}
\newcommand{\mdecrement}{\addtolength{\mboxwidth}{\mindent}}

\newlength{\preprogramskip}
\newlength{\postprogramskip}
\setlength{\preprogramskip}{\smallskipamount}
\setlength{\postprogramskip}{\smallskipamount}

\newlength{\mexpwidth}
\newlength{\mexpindent}
\newcommand{\indentafterkeyword}{\hspace*{0.5em}}

\newcommand{\mslifelse}[3]  %if is short else is long
{\setlength{\mexpwidth}{\mboxwidth}%
\settowidth{\mexpindent}{{\bf if\indentafterkeyword}}%
\addtolength{\mexpwidth}{-\mexpindent}%
{\bf if\indentafterkeyword}\parbox[t]{\mexpwidth}{#1}\\
\mincrement \mbegin \parbox[t]{\mboxwidth}{#2 \mend} \mdecrement \\
{\bf else} \\
\mincrement \mbegin \parbox[t]{\mboxwidth}{#3}\\
\mend \mdecrement
}

% compressed itemize with small preskip
{\vspace{-0.3em}\begin{itemize}
\setlength{\itemsep}{0.2\itemsep}%
\setlength{\parskip}{0.2\parskip}%
\setlength{\topsep}{0.0\topsep}%
}
{\end{itemize}}

% compressed itemize with standard preskip
{\begin{itemize}
\setlength{\itemsep}{0.2\itemsep}%
\setlength{\parskip}{0.2\parskip}%
\setlength{\topsep}{0.0\topsep}%
}
{\end{itemize}}

% compressed description with small preskip
{\vspace{-0.3em}\begin{description}
\setlength{\itemsep}{0.2\itemsep}%
\setlength{\parskip}{0.2\parskip}%
\setlength{\topsep}{0.0\topsep}%
}
{\end{description}}

% compressed description with standard preskip
{\begin{description}
\setlength{\itemsep}{0.2\itemsep}%
\setlength{\parskip}{0.2\parskip}%
\setlength{\topsep}{0.0\topsep}%
}
{\end{description}}

% compressed itemize with small preskip
{\vspace{-0.3em}\begin{enumerate}
\setlength{\itemsep}{0.2\itemsep}%
\setlength{\parskip}{0.2\parskip}%
\setlength{\topsep}{0.0\topsep}%
}
{\end{enumerate}}

% compressed itemize with standard preskip
{\begin{enumerate}
\setlength{\itemsep}{0.2\itemsep}%
\setlength{\parskip}{0.2\parskip}%
\setlength{\topsep}{0.0\topsep}%
}
{\end{enumerate}}

\newlength{\proofpostskipamount}\newlength{\proofpreskipamount}
\setlength{\proofpreskipamount}{0.0ex} %{0.5ex}

\setlength{\proofpostskipamount}{0.0ex}  %{0.5ex}

%\newenvironment{proof}%
 %              {\par\vspace{\proofpreskipamount}\noindent{\bf Proof:}\hspace{0.5em}}% 0.5 before
 %              {\nopagebreak%
 %               \strut\nopagebreak%
 %               \hspace{\fill}\qed\par\vspace{\proofpostskipamount}\noindent}

%
               {\par\vspace{0.5ex}\noindent{\bf Proof #1:}\hspace{0.5em}}%
               {\nopagebreak%
                \strut\nopagebreak%
                \hspace{\fill}\qed\par\medskip\noindent}

\newlength{\mydefwidth}
\newlength{\mytextwidth}

\newcommand{\myurl}[1]{{\footnotesize \url{#1}}}

\newcommand{\multipoint}[2]{#1[#2]}
\newcommand{\var}{\operatorname{var}}

\urldef{\mails}\path|{msagralo}@mpi-inf.mpg.de|

\begin{document}

\title{A Near-Optimal Algorithm for Computing Real Roots of Sparse Polynomials}

\numberofauthors{1} 

\author{
%
% 1st. author
\alignauthor
Michael Sagraloff\\
       \affaddr{Max-Planck-Institut f\"ur Informatik, Germany}\\
       %\email{msagralo@mpi-inf.mpg.de}
}

\maketitle

\begin{abstract}
Let $p\in\Z[x]$ be an arbitrary polynomial of degree $n$ with $k$ non-zero integer coefficients of absolute value less than $2^\tau$. In this paper, we answer the open question whether the real roots of $p$ can be computed with a number of arithmetic operations over the rational numbers that is polynomial in the input size of the sparse representation of $p$.
More precisely, we give a deterministic, complete, and certified algorithm that determines isolating intervals for all real roots of $p$ with $O(k^3\cdot\log(n\tau)\cdot \log n)$ many exact arithmetic operations over the rational numbers.

When using approximate but certified arithmetic, the bit complexity of our algorithm is bounded by $\tilde{O}(k^4\cdot n\tau)$, where $\tilde{O}(\cdot)$ means that we ignore logarithmic. Hence, for sufficiently sparse polynomials (i.e.~$k=O(\log^c (n\tau))$ for a positive constant $c$), the bit complexity is $\tilde{O}(n\tau)$. We also prove that the latter bound is optimal up to logarithmic factors. 
\end{abstract}

\section{Introduction}\label{intro}

Throughout the following considerations, let
\begin{align}
p(x):=\sum\nolimits_{i=0}^n a_i x^i\in\Z[x]\text{, with }|a_i|<2^\tau\text{ and }\tau\in N_{\ge 1},\label{polyf}
\end{align}
be a (not necessarily square-free) polynomial of degree $n$ with integer coefficients of bit-size less than $\tau$, and let $k$ be the number of non-zero coefficients $a_{i_0},\ldots,a_{i_{k-1}}$, with $0\le i_{0}\le\cdots\le i_{k-1}=n$. For convenience, we denote a polynomial $p\in\Z[x]$ of degree at most $n$ and with at most $k$ non-vanishing coefficients, each of absolute value less than $2^\tau$, a \emph{$k$-nomial of magnitude} $(n,\tau)$.
We assume that $p$ is given by its sparse representation
\begin{align}
p(x)=\sum\nolimits_{l=0}^{k-1} a_{i_l} x^{i_l},\quad\text{where }a_{i_l}\neq 0\text{ for all }l=0,\ldots,k-1.\label{polyfsparse}
\end{align}
Notice that the sparse representation needs $O(k\cdot(\tau+\log n+1))$ many bits. Namely, we need one bit for the sign of each coefficient $a_{i_l}$, $\tau$ or less bits for the binary representation of $|a_{i_l}|$, and $\log n$ bits for the binary representation of each index $i_l$. To date, it was unknown whether we can isolate (or just count) the real roots of $p$ with a number of arithmetic operations over $\Q$ that is polynomial in the input size of the sparse representation of $p$. This paper gives a positive answer to the latter question. In addition, we show that, for isolating all real roots of a sparse enough polynomial $p\in\Z[x]$, our algorithm is near-optimal:

\begin{theorem}
Let $p\in\Z[x]$ be a $k$-nomial of magnitude $(n,\tau)$, then we can isolate all real roots of $p$ with $O(k^3\cdot \log(n\tau)\cdot\log n)$ many arithmetic operations over the rational numbers. In addition, for $k=O(\log^c(n\tau))$, with $c$ a non-negative constant, we need at most $\tilde{O}(n\tau)$ bit operations to isolate all real roots of $p$. The latter bound is optimal up to logarithmic factors in $n$ and $\tau$.
\end{theorem}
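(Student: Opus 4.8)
The plan is to establish the two halves of the statement separately: an algorithm meeting the arithmetic bound $O(k^3\log(n\tau)\log n)$, from which the bit bound will follow by switching to approximate arithmetic, and a matching family of hard instances. For the algorithm I would first split off the factor $x^{i_0}$ (so that a root at $0$ is read off directly) and reduce to isolating the positive real roots of a $k$-nomial $q(x)=\sum_l b_lx^{e_l}$ with $0=e_0<\cdots<e_{k-1}$ and $b_0\neq0$; the negative roots come from the substitution $x\mapsto-x$. What rules out the classical tools -- Sturm sequences, Descartes' rule on the dense coefficient vector, real-root isolation by subdivision -- is that $n$ may be exponential in the sparse input size $O(k(\tau+\log n))$. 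The device that circumvents this is the Newton polygon of $q$, i.e.\ the upper convex hull of the points $(e_l,\log_2|b_l|)\in\mathbb{R}^2$: its at most $k-1$ edge slopes are the \emph{tropical roots} $t^\ast$ of $q$, and the key structural fact (established in the body of the paper) is that every positive real root of $q$ is within a factor $2^{O(k)}$ of some $2^{t^\ast}$, whereas away from the tropical roots a single monomial of $q$ dominates all others, so that $q$ is sign-definite there. Computing the hull with $O(k\log k)$ operations thus confines all positive real roots to $O(k)$ \emph{admissible} intervals, each of width $O(k)$ on the logarithmic scale.

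On each admissible interval I would work in two phases. In a \emph{crude phase} I subdivide the interval at $O(k)$ points -- enough, since it has width $O(k)$ on the logarithmic scale and $q$ has at most $2k-1$ real roots -- and evaluate $\sgn q$ at each; this brackets every root, or every cluster of coalescing roots, in a sub-interval short enough to seed a quadratically convergent iteration. A single sign test $\sgn q(r)$ costs $O(k\log n)$ operations, each power $r^{e_l}$ being computed by repeated squaring, so the crude phase over all $O(k)$ admissible intervals costs $O(k^3\log n)$. In a \emph{refinement phase}, $O(\log(n\tau))$ steps of a cluster-aware Newton iteration push the working precision from $2^{-O(k)}$ below the root-separation bound $2^{-O(n(\tau+\log n))}$, which certifies isolation; over all $O(k)$ roots this is $O(k^2\log(n\tau)\log n)$, and together with the crude phase the arithmetic cost is $O(k^3\log(n\tau)\log n)$ operations over $\Q$. \emph{I expect the cluster handling and the certification of every sign decision to be the main obstacle}: when several real roots are bunched together Newton's method does not pull them apart, so on the cluster interval one must recurse -- after dividing out the dominant monomials, $q$ again has the shape of a $k$-nomial of the same magnitude but with fewer relevant terms -- and each branching decision must be taken rigorously. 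For the bit complexity one runs the same algorithm with adaptive interval arithmetic: every number that arises, notably a value $q(r)$ at a point $r$ of bit-size $O(\tau)$ with $|r|$ possibly as small as $2^{-\tau}$, hence of bit-size $O(n\tau)$, is handled with $\tilde{O}(n\tau)$ bit operations, so the total is $\tilde{O}(k^{O(1)}n\tau)$, which for $k=O(\log^c(n\tau))$ is $\tilde{O}(n\tau)$.

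For optimality I would exhibit a Mignotte-type family of $4$-nomials of magnitude $(n,\tau)$, for example $p_{n,\tau}(x)=x^n-2(2^sx^2-1)^2=x^n-2^{2s+1}x^4+2^{s+2}x^2-2$ with $s$ odd, $s=\Theta(\tau)$ and $2s+1<\tau$. Since $p_{n,\tau}(2^{-s/2})=2^{-ns/2}>0$ while $p_{n,\tau}$ turns negative at distance $\Theta(2^{-(n+2)s/4})$ on either side, $p_{n,\tau}$ has two positive real roots $\alpha_1<2^{-s/2}<\alpha_2$ with $\alpha_2-\alpha_1=2^{-\Omega(n\tau)}$, clustered around the quadratic irrational $2^{-s/2}$. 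Any two disjoint isolating intervals for $\alpha_1$ and $\alpha_2$ must contain a rational number strictly between them; but $2^{-s/2}$ is a root of $2^sX^2-1$, so any rational $u/v$ (in lowest terms) with $|u/v-2^{-s/2}|<2^{-\Omega(n\tau)}$ satisfies $|2^su^2-v^2|\ge1$ -- here $s$ odd is exactly what guarantees $2^su^2\neq v^2$ -- which forces $v\ge2^{\Omega(n\tau)}$; hence that endpoint, and therefore the output as a whole, needs $\Omega(n\tau)$ bits, and an algorithm needs $\Omega(n\tau)$ bit operations just to write it. The reason the textbook Mignotte polynomial $x^n-2(ax-1)^2$ does not serve here is precisely that its two close roots straddle the cheap rational $1/a$; placing the cluster at a quadratic irrational removes that shortcut. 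Since $k=4=O(\log^0(n\tau))$ lies in the stated sparsity regime, this matches the $\tilde{O}(n\tau)$ upper bound, so that bound is optimal up to logarithmic factors in $n$ and $\tau$, which completes the plan.
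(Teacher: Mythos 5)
Your route is genuinely different from the paper's: you preprocess via the Newton polygon so that the $O(k)$ tropical roots localize all positive real roots to $O(k)$ admissible intervals, whereas the paper proceeds by recursive differentiation, setting $p_0:=p$ and $p_j:=x^{1-(i_j-i_{j-1})}\,p_{j-1}'$ so that each $p_j$ is a $(k-j)$-nomial and $p_{k-1}$ is a constant; going backwards, $p_{j-1}$ is monotone between consecutive roots of $p_j$ (Rolle), so its isolating intervals are read off by sign evaluation of $p_{j-1}$ at the refined roots of $p_j$, with Theorem~\ref{evalbound} supplying the evaluation/separation bound that makes each sign decision and each multiple-root detection rigorous. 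Your lower-bound instance is essentially the paper's -- a $4$-nomial of the shape $x^n-(cx^2-a)^2$ with two real roots at distance $2^{-\Omega(n\tau)}$ straddling a quadratic irrational -- and both optimality arguments rest on the same Diophantine fact (the paper via continued fractions in Lemma~\ref{rationalbound}, you via $|2^su^2-v^2|\ge 1$).

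The isolation step, however, has a genuine gap, which you yourself flagged as the main obstacle. Your crude phase evaluates the sign of $q$ at $O(k)$ sample points per admissible interval, but a cluster of roots of \emph{even} total multiplicity lying strictly between two consecutive sample points produces no sign change and is invisible to this test: a double root is the cleanest example, and two nearby simple roots have the same effect. The paper's scheme avoids this precisely because its sample points are the roots of the derivative $p_1$: between consecutive ones $p$ is strictly monotone, so a sign change at the sample points is \emph{equivalent} to the presence of a simple root, and multiple roots of $p$ are exactly the common roots of $p$ and $p_1$, certified via the explicit bound of Theorem~\ref{evalbound}. Your proposed remedy -- recurse on the cluster ``after dividing out the dominant monomials, so that $q$ again has the shape of a $k$-nomial with fewer relevant terms'' -- does not repair this: dividing by a monomial is a unit in $\Z[x,x^{-1}]$, so it changes neither the positive roots nor the number of non-vanishing terms, and with no a priori separation/evaluation bound there is no criterion for when to stop recursing and declare a multiple root rather than a tighter cluster of simple ones. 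Some explicit separation and evaluation bound, together with a Rolle-type bracketing by derivative roots (or an equivalent certified device), must enter the argument; the tropical localization is a useful accelerant but does not by itself solve the certified-isolation problem.
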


There exist numerous algorithms,\footnote{\small The literature on root solving is extensive. Hence, due to space limitations, we decided to restrict to a small selection of representative papers and refer the reader to the references given therein.} e.g.~\cite{DBLP:journals/jsc/BurrK12,ESY06,DBLP:conf/issac/GarciaG12,McNamee-Pan,MSW-rootfinding2013,Pan:alg,rouillier-zimmermann:roots:04,Sagraloff12,DBLP:journals/corr/SagraloffM13,DBLP:conf/issac/YapS11,Schoenhage,DBLP:journals/tcs/Tsigaridas13}, for efficiently computing the real (complex) roots of a polynomial $p$ as in (\ref{polyfsparse}), \emph{given that $k$ is large enough}. 
That is, for $k=\Omega(n^c)$ with $c$ an arbitrary but positive constant, the computational complexity of 
these algorithms is polynomial in the input size. For isolating all complex roots of $p$, Pan's method~\cite{MSW-rootfinding2013,Pan:alg}, which goes back to Sch\"onhage's splitting circle approach~\cite{Schoenhage}, achieves 
record bounds with respect to arithmetic and bit complexity in the worst case. More specifically, it needs $\tilde{O}(n)$ 
arithmetic operations performed with a precision of $\tilde{O}(n\tau)$ bits, and thus, $\tilde{O}(n^2\tau)$ bit operations. Besides Pan's method, which computes all complex roots at once, there 
also exist very efficient methods for computing the real roots only. A recently proposed algorithm, denoted \textsc{ANewDsc}~\cite{DBLP:journals/corr/SagraloffM13}, which combines Descartes' Rule of Signs, Newton iteration, and approximate arithmetic, has a bit complexity that is comparable to Pan's method; for any given positive integer $L$, \textsc{ANewDsc} uses $\tilde{O}(n^3+n^2\tau+nL)$ bit operations to compute isolating intervals of size less than $2^{-L}$ for all real roots of $p$.
We further remark that both of the above mentioned methods can be used to efficiently isolate the roots of a polynomial $p$ whose coefficients can only be learned from (arbitrarily good) approximations, given that $p$ has no multiple roots.
In this model, the bound on the bit complexity is stated in terms of the degree, the discriminant, and the Mahler bound of~$p$.

In contrast, for general $k$, much less is known about the computational complexity of computing (or just counting) the real roots of $p$. In~\cite{CUCKER1999}, Cucker et al. proposed a method to compute all \emph{integer} roots of $p$ with a number of bit operations that is polynomial in the input size. Lenstra~\cite{lenstra99} further showed that all rational roots of $p$ can be computed in polynomial time. In fact, he even proved that one can compute all factors of $p$ over $\Q$ of a fixed degree $d$ with a number of bit operations that is polynomial in the input size and $d$. For trinomials $p$ (i.e.~$k=3$) with arbitrary real coefficients, Rojas and Ye~\cite{Rojas05} gave an algorithm for counting (and \emph{$\epsilon$-approximating}) all real roots of $p$ that uses $O(\log^2 n)$ arithmetic operations in the field over $\Q$ generated by the coefficients of $p$. However, already for polynomials $p\in\R[x]$ with more than $3$ monomials, it is unknown whether there exists a deterministic polynomial-time algorithm for computing (or just counting) the real roots of $p$. Bastani et al.~\cite{Bastani11} introduced a deterministic algorithm that, for most inputs, counts the number of real roots of a tetranomial $p$ (i.e.~$k=4$). Its arithmetic complexity is polynomial in the input size, and, in the special case where $p$ has integer coefficients, even the bit complexity is polynomial.
For general $k$-nomials $p\in\Z[x]$ with integer coefficients, we are not aware of any method, either for counting or isolating the real roots, that achieves an arithmetic complexity that is polynomial in the input size of the sparse representation of $p$. 

For the bit complexity, the best known bound for isolating the roots of a (not necessarily sparse polynomial) $p\in\Z[x]$ is $\tilde{O}(n^2\tau)$, and we expect no improvement of the corresponding algorithms when restricting to sparse polynomials. Namely, since Pan's method computes all complex roots, it needs $\Omega(n)$ arithmetic operations. Also, methods based on Descartes' Rule of Signs need at least $\Omega(n)$ arithmetic operations due to a transformation of the polynomial $p$ that destroys its sparsity. This together with the fact that there exist $4$-nomials that require a precision of $\Omega(n\tau)$ for isolating its (real) roots (see also the proof of Theorem~\ref{maintheorem2}) indicates that both approaches have a worst-case bit complexity of $\Omega(n^2\tau)$. In addition, for isolating the roots of $p$, most algorithms need to compute the square-free part of $p$ in a first step, and the best known deterministic bounds~\cite[Section~14]{gathen-gerhard:algebra:bk} for the arithmetic and bit complexity of the latter problem are $\tilde{O}(n)$ and $\tilde{O}(n^2\tau)$, respectively.\\

Our algorithm is rather simple from a high-level perspective and combines mainly known techniques. Thus, we consider our contribution to be the right assembly of these techniques into an algorithm and the complexity analysis.
The main idea underlying our approach is to compute isolating intervals for the roots of $p_0:=p/x^{i_0}$ from sufficiently small isolating intervals for the roots of the polynomial\footnote{\small For simplicity, the reader may assume that $i_0=0$, and thus, $p_1$ has the same roots as the derivative $p'=\frac{dp}{dx}$ of $p$ except for the root at zero.} $p_1:=p_0'\cdot x^{1-i_1}$. Notice that $p_1$ is a $(k-1)$-nomial of magnitude $(n,\tau+\log n)$ with a non-vanishing constant coefficient. Using evaluation and separation bounds, we can determine the sign of $p_0$ at the roots of $p_1$ by evaluating $p_0$ at arbitrary points in the corresponding isolating intervals, and thus, we can compute common roots of $p_0$ and $p_1$. In addition, we can immediately derive isolating intervals for the simple real roots of $p_0$ as $p_0$ is monotone in between two consecutive roots of $p_1$. Then, the isolating intervals for the roots of $p_0$ can be further refined to an arbitrary small size. Hence, recursive application of the above approach allows us to compute isolating intervals for $p$ from the roots of a $1$-nomial $p_{k-1}\in\Z[x]$ after $k$ iterations; see Section~\ref{sec:algorithm} for details.

Efficiency of the above approach crucially depends on the method to refine the isolating intervals for the simple roots of the polynomials $p_i$ that are considered in the recursion.
For this, we modify an efficient method for approximating (clusters of) real roots as recently proposed in~\cite{DBLP:journals/corr/SagraloffM13}. Since the method from~\cite{DBLP:journals/corr/SagraloffM13} is based on Descartes' Rule of Signs, its arithmetic complexity is super-linear in $n$. Hence, in order to exploit the sparsity of the polynomials $p_i$, we had to replace the corresponding steps by simple polynomial evaluation.
 For an arbitrary positive integer $L$, the so-obtained method refines arbitrarily isolating intervals for all simple roots of a $k$-nomial $p$ of magnitude $(n,\tau)$ to a size less than $2^{-L}$ in $O(k\cdot (\log n+ \log(\tau+L)))$ iterations, and, in each iteration, $p$ is evaluated at a constant number of points. This yields an arithmetic complexity for the refinement steps that is polynomial in the input size. We consider the refinement method as the key ingredient of our algorithm, and think that it is of independent interest.
 
When using exact arithmetic over the rationals, the bit complexity of our algorithm is $\tilde{O}(k^3\cdot n^2\tau)$. We further show that, when replacing exact by approximate computation, the bit-size of the intermediate results reduces by a factor $n$ for the price of using $k$ times as many arithmetic operations. This yields the bound $\tilde{O}(k^4\cdot n\tau)$, and thus, $\tilde{O}(n\tau)$ for sufficiently small $k$, that is, $k=O(\log^c(n\tau))$. We also prove that the latter bound is optimal, where we use the fact that there exist $4$-nomials such that the binary representations of the corresponding isolating intervals need $\Omega(n\tau)$ bits. 

\section{Algorithm and Complexity}\label{sec:idea}

\subsection{The Algorithm}\label{sec:algorithm}

\ignore{Let $z_1,\ldots,z_n$ denote the complex roots of $p$, $\sigma(z_i):=\min_{j\neq i}|z_i-z_j|$ the \emph{separation of $z_i$}, and $\sigma_p:=\min_i \sigma(z_i)$ the \emph{separation of $p$}.}
It is well known (e.g., see~\cite{CUCKER1999,Rojas05}) that the number of real roots of $p$ is upper bounded by $2k-1$. Namely,
according to Descartes' Rule of Signs, the number of positive real roots of $p$ (counted with multiplicity) is upper bounded by the number of sign changes in the coefficient sequence of $p$, and thus, smaller than $k$. The same argument applied to the polynomial $p(-x)$ further shows that the number of negative roots of $p(x)$ is smaller than $k$ as well. 

In what follows, we may assume, w.l.o.g., that $i_0=0$. Namely, if $i_0>0$, then $p$ has the same roots as $p/x^{i_0}$ plus an additional root at $x=0$ of multiplicity $i_0$. Hence, we can consider the polynomial $p/x^{i_0}$ instead. In addition, we may restrict our search to the positive roots; for the negative roots, we can then apply the same approach to the polynomial $p(-x)$. 
According to Cauchy's root bound, the modulus of each (complex) root is upper bounded by $1+2^{\tau}< 2^{\tau+1}$, and thus, for isolating the positive real roots of $f$, we can restrict our search to the  interval $\mathcal{I}:=(0,2^{\tau+1})$. 
We write $p$ as
\begin{align*}
p(x)=a_{i_0}+x^{i_{i_1}}\cdot(a_{i_1}+\cdots+a_{i_{k-1}}\cdot x^{i_{k-1}-i_1})=a_{i_0}+x^{i_1}\cdot\hat{p}(x),
\end{align*}
where $\hat{p}$ has degree $n-l_1<n$ and exactly $k-1$ non-zero coefficients. The idea is now to compute isolating intervals for the positive roots of $p_0:=p$ from sufficiently small isolating intervals for the positive roots of its derivative $p'(x):=\frac{dp(x)}{dx}$. For this, we do not directly consider the derivative $p'(x)$ but the polynomial
\begin{align}
p_1(x):=x\cdot\hat{p}'(x)+i_1\cdot\hat{p}(x)=\frac{p'(x)}{x^{i_1-1}},\label{polybarf}
\end{align}
which has the same roots as $p'$ except for the root at $x=0$ of multiplicity $i_1-1$. Notice that $p_1$ is a $(k-1)$-nomial of magnitude $(n-i_1,\tau+\log n)$ and that its coefficients can be computed from the coefficients of $p$ using $k$ multiplications and $k$ additions. 

Let $x_{1}'$ to $x_{k_1}'$, with $0\le k_1\le k-2$, denote the roots of $p_1$ that are contained in $\mathcal{I}=(0,2^{\tau+1})$. W.l.o.g., we may assume that $0<x_{1}'<x_{2}'<\cdots<x_{k_1}'<2^{\tau+1}$.
Now, suppose that, for each root $x_{j}'$, an isolating interval $I_{j}'=(a_{j},b_{j})\subset\mathcal{I}$ of width less than $2^{-L}$, with $L:=128\cdot n\cdot(\tau+k\cdot \log n)$ and $a_{j},b_{j}\in\Q$, is given. Then, based on the following theorem, we can compute the sign of $p$ at the roots of $p_1$, and thus, determine common roots of $p$ and $p_1$. 
Because of space limitations, we give the proof of Theorem~\ref{evalbound} in the Appendix. It mainly combines known results, however, we remark that we could not find a comparable result in the literature, where only evaluation/separation bounds of size $\tilde{O}(n^2+n\mu)$ are given. 

\begin{theorem}\label{evalbound}
Let $f$ and $g$ be polynomials of degree $n$ or less with integer coefficients of absolute values less than $2^{\mu}$, and let 
\begin{align}
L:=128\cdot n\cdot (\log n+\mu).
\end{align}
Then, for any two distinct roots $\xi_i$ and $\xi_j$ of $F:=f\cdot g$, it holds that $|\xi_i-\xi_j|^{m_i}>2^{-L}$, where $m_i:=\operatorname{mult}(\xi_i,F)$ denotes the multiplicity of $\xi_i$ as a root of $F$. If $\xi$ is a root of $g$ and $f(\xi)\neq 0$, then it holds that $|f(x)|>2^{-L/4}$ for all $x\in\C$ with $|x-\xi|<2^{-L}$. Vice versa, if $f(\xi)=0$, then $|f(x)|<2^{-L}$ for all $x\in\C$ with $|x-\xi|<2^{-L}$.
\end{theorem}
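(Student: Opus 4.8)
The plan is to derive all three claims from two classical ingredients: a lower bound on the product of pairwise distances between roots of an integer polynomial (a root-separation / discriminant-type bound), and a Cauchy-style bound on how fast a polynomial can grow or decay near one of its roots in terms of the magnitude of its coefficients.

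\emph{Step 1: the separation bound for $F = f\cdot g$.} Write $F = \sum_{l} c_l x^l$; then $F$ has degree at most $2n$, and by multiplying out, its integer coefficients have absolute value at most $(n+1)\cdot 2^{2\mu} \le 2^{2\mu + \log n + 1}$. For any two distinct roots $\xi_i,\xi_j$ of $F$ with multiplicities $m_i,m_j$, I would invoke the standard bound (e.g.\ via the Mahler measure and the discriminant, as in Mignotte-type estimates) that expresses $|\xi_i-\xi_j|$ from below in terms of $\deg F$ and $\log\|F\|$. The usual form gives $\prod_{i\neq j}|\xi_i-\xi_j| \ge (\deg F)^{-O(\deg F)}\cdot\|F\|^{-O(\deg F)}$, and isolating a single factor (using that all other factors are bounded above by $(1+\|F\|_\infty)^{O(\deg F)}$ from Cauchy's root bound) yields $|\xi_i-\xi_j| \ge (\deg F)^{-O(\deg F)}\|F\|^{-O(\deg F)}$. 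Substituting $\deg F\le 2n$ and $\log\|F\|_\infty \le 2\mu + \log n + 1$, one checks that the exponent is dominated by $128\,n(\log n + \mu)$, i.e.\ $|\xi_i-\xi_j| > 2^{-L}$; raising to the power $m_i$ only makes the bound easier since $|\xi_i-\xi_j|$ could be $<1$, so I would instead apply the separation bound to a suitable power, or simply note $m_i\le\deg F\le 2n$ and absorb it into the constant — here the constant $128$ is chosen generously precisely so this absorption goes through.

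\emph{Step 2: the evaluation bound.} Suppose $\xi$ is a root of $g$ with $f(\xi)\neq 0$, and let $|x-\xi|<2^{-L}$. Since $\xi$ is also a root of $F=fg$ and $\xi$ is \emph{not} a root of $f$, $\xi$ is a root of $F$ but the "$f$-part" survives; I want $|f(x)|$ bounded below. First bound $|f(\xi)|$ from below: $f(\xi)\neq 0$ means $\xi$ is among the roots of $F$ but $|f(\xi)| = |\ldcf(f)|\prod_{f(\alpha)=0}|\xi-\alpha|$, and each $|\xi-\alpha|$ is a distance between two distinct roots of $F$, hence $>2^{-L}$ by Step 1; there are at most $n$ such factors, so $|f(\xi)| > 2^{-nL}$. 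That is too weak by itself, so instead I would apply Step 1 with the sharper accounting that $\prod_{f(\alpha)=0}|\xi-\alpha| \ge$ the corresponding product bound directly, giving $|f(\xi)| > 2^{-L/2}$ with the constant in $L$ absorbing the factor of $n$. Then a Lipschitz/Taylor estimate controls the change: $|f(x)-f(\xi)| \le |x-\xi|\cdot\max_{|t-\xi|\le 1}|f'(t)| \le 2^{-L}\cdot 2^{O(n+\mu+\log n)} < 2^{-L/2}/2$, again because $L$ is chosen with a large enough leading constant. Hence $|f(x)| \ge |f(\xi)| - |f(x)-f(\xi)| > 2^{-L/2} - 2^{-L/2}/2 > 2^{-L/4}$ (crudely), which gives the stated inequality.

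\emph{Step 3: the vanishing bound.} If $f(\xi)=0$ and $|x-\xi|<2^{-L}$, factor out the root: $f(x) = (x-\xi)^{m}\cdot h(x)$ where $m=\operatorname{mult}(\xi,f)\ge 1$ and $|h(x)|$ is bounded above on $|x-\xi|\le 1$ by $2^{O(n+\mu)}$ (Cauchy bound on coefficients of $f$, hence on $h$). Then $|f(x)| \le 2^{-mL}\cdot 2^{O(n+\mu)} \le 2^{-L}\cdot 2^{O(n+\mu)}$, and since $L\ge 128 n(\log n+\mu)$ the surplus $2^{O(n+\mu)}$ is swallowed, leaving $|f(x)| < 2^{-L}$ — wait, here I must be slightly more careful and keep $2^{-L}$ on the right, so I would actually prove the cleaner statement $|f(x)| < 2^{-L}$ by noting $|f(x)|\le 2^{-mL+O(n+\mu)}$ and that even for $m=1$ the bound $L - O(n+\mu) > L$ fails, so the correct reading is that one uses $|x-\xi|<2^{-2L}$ internally or, as the paper surely intends, a slightly smaller target; I would follow whatever normalization the appendix uses and simply track constants. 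The main obstacle in the whole proof is purely bookkeeping: verifying that the single explicit constant $128$ simultaneously dominates the (distinct, competing) constants hidden in the classical separation bound, the derivative bound in Step 2, and the factored-out bound in Step 3, for \emph{all} admissible $n,\mu\ge 1$. Nothing is deep; the work is in pinning down the Mignotte-type constants and checking the arithmetic of the exponents.
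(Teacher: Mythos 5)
Your proposal correctly identifies the overall flavor of the result (separation bound plus Taylor/Cauchy growth bounds, with a generous constant doing the bookkeeping), but there are three concrete gaps, two of which you yourself flag without resolving, and one of which would kill the argument outright.

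\paragraph{The exponent $m_i$ in the first claim.} You propose to first show $|\xi_i-\xi_j|>2^{-L}$ and then ``absorb $m_i\le 2n$ into the constant $128$.'' This cannot work: $|\xi_i-\xi_j|^{m_i}>2^{-L}$ is not implied by $|\xi_i-\xi_j|>2^{-L}$ when $|\xi_i-\xi_j|<1$, and absorbing $m_i$ would require the constant to grow like $n$, i.e.\ an $L$ of order $n^2(\log n+\mu)$ rather than $n(\log n+\mu)$. The paper never bounds $|\xi_i-\xi_j|$ alone. Instead it invokes a \emph{generalized} Davenport--Mahler--Mignotte bound (from the cited root-finding reference) of the form
\begin{equation*}
\prod_{i=1}^{r_0}\min\Bigl(1,\tfrac{|F^{(m_i)}(\xi_i)|}{|F_N|\cdot m_i!}\Bigr)\ge \bigl(2^{3\tau_F+2\log N+1}\cdot\operatorname{Mea}(F)\bigr)^{-N},
\end{equation*}
valid for arbitrary, possibly non-square-free $F$. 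Since $\tfrac{|F^{(m_i)}(\xi_i)|}{|F_N|\cdot m_i!}=\prod_{l\neq i}|\xi_i-\xi_l|^{m_l}$, the factor $|\xi_i-\xi_j|^{m_j}$ appears \emph{with its multiplicity already attached}; one then bounds the remaining factors $|\xi_i-\xi_l|^{m_l}$ ($l\neq i,j$) from above by a power of the root bound. That is how the correct exponent arrives: it is intrinsic to the product, not retrofitted by inflating the constant. Note also the statement's $m_i$ is really obtained as an $m_j$ via symmetry. Your reference to the classical discriminant bound for square-free polynomials is exactly the thing the paper warns against: passing to the square-free part of a degree-$n$, height-$2^\mu$ polynomial costs you height $2^{O(n+\mu)}$, producing a bound of order $n^2+n\mu$ rather than $n(\log n+\mu)$.

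\paragraph{The lower bound on $|f(\xi)|$.} You correctly observe that the naive estimate $|f(\xi)|>2^{-nL}$ is far too weak, and you gesture at ``sharper accounting,'' but you do not supply the mechanism. The paper's mechanism is a Leibniz-rule identity: if $\xi$ has multiplicity $m$ as a root of $g$ (and of $F$, since $f(\xi)\neq 0$), then $F^{(m)}(\xi)=f(\xi)\,g^{(m)}(\xi)$ because all lower derivatives of $g$ vanish at $\xi$. Hence $|f(\xi)|=|F^{(m)}(\xi)|/|g^{(m)}(\xi)|$, and the numerator is exactly the quantity controlled from below by the generalized DMM bound while the denominator is trivially bounded above by $(n+1)2^\mu|\xi|^n$. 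This identity is the missing step; without it you only get the $2^{-nL}$ bound you rightly rejected.

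\paragraph{The vanishing bound.} You get stuck here and propose to ``follow whatever normalization the appendix uses.'' In fact your worry that $L-O(n+\mu)<L$ is a red herring: writing $f(x)=f(x)-f(\xi)$ and using $|x-\xi|<2^{-L}$ together with the coarse coefficient bound $n^2\cdot 2^{n(\mu+1)}$ on the Taylor coefficients gives $|f(x)|<2^{-L}\cdot n^2\cdot 2^{n(\mu+1)}\le 2^{-L/2}$, because the ``surplus'' $n(\mu+1)+2\log n$ is dwarfed by $L/2=64n(\log n+\mu)$. (The paper's own proof in fact delivers $2^{-L/2}$, which is what is actually used later in the paper --- any bound $\le 2^{-L/2}$ serves since the sign evaluations are performed to precision $L/2$.) There is no need to factor out $(x-\xi)^m$; you only need $f(\xi)=0$ and a first-order Taylor estimate.

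In short: the proposal sketches the correct auxiliary tools but lacks both of the paper's key mechanisms --- the generalized (multiplicity-aware) DMM product bound and the identity $F^{(m)}(\xi)=f(\xi)g^{(m)}(\xi)$ --- and the places where you hope to ``absorb into the constant'' are precisely the places where the paper must do something structurally different. As written the proposal does not close.
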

\ignore{
\begin{proof}
For the proof, we mainly combine known results~\cite{MSW-rootfinding2013}, however, we aim to stress the fact that the following computations are necessary to derive an $L$ of size $O(n(\log n+\tau))$. Namely, the literature only provides comparable bounds for square-free polynomials, whereas, for arbitrary polynomials, the existing bounds are of size $\tilde{O}(n^2+n\tau)$. This is mainly due to the fact that the known bounds for square-free polynomials are directly applied to the square-free part, and, in general, the square-free part of an integer polynomial of magnitude $(n,\tau)$ is of magnitude $(n,O(n+\tau))$.

Let $F(x)=f(x)\cdot g(x)=F_N\cdot\prod_{j=1}^N (x-z_j)$, where $z_1,\ldots,z_N$ denote the complex roots of $F$. Then, $F$ has degree $N\le 2n$ and its coefficients are integers of absolute value $2^{\tau_F}$ with $\tau_F<2(\tau+\log n)$. Now, suppose that $F$ has exactly $r_0$, with $1\le r_0\le \deg F$, distinct complex roots $\xi_1$ to $\xi_{r_0}$ with multiplicities $m_1$ to $m_{r_0}$, respectively. From the proof of~\cite[Theorem 5]{MSW-rootfinding2013}, we conclude that
\[
\prod_{i=1}^{r_0}\min\left(1,\frac{|F^{(m_i)}(\xi_i)|}{|F_N|\cdot m_i!}\right)\ge \left(2^{3\tau_F+2\cdot\log N+1}\cdot \operatorname{Mea}(F)\right)^{-N},
\]
where $\operatorname{Mea}(F)=|F_N|\cdot\prod_{i=1}^{r_0}\max(1,|\xi_i|)^{m_i}$ denotes the Mahler Measure of $F$ and $F^{(m)}(x):=\frac{d^m F(x)}{dx^m}$ the $m$-th derivative of $F$. Since $\operatorname{Mea}(F)\le \|F\|_2\le\sqrt{N+1}\cdot 2^{\tau_F}$, a simple computation shows that
\begin{align}
\prod_{i=1}^{r_0}\min\left(1,\frac{|F^{(m_i)}(\xi_i)|}{|F_N|\cdot m_i!}\right)> 2^{-24n(\tau+\log n)}.\label{bound1}
\end{align}
Now, assume that $\xi=\xi_i$ is a root of $g$ and that $f(\xi)\neq 0$. Then, it follows that
\[
|f(\xi)|=\frac{|F^{(m_i)}(\xi_i)|}{|g^{(m_i)}(\xi_i)|}> \frac{2^{-24n(\tau+\log n)}}{(n+1)\cdot 2^\tau \cdot |\xi_i|^n}>2^{-28n(\tau+\log n)},
\]
where we used that $\xi_i$ is a root of $g$ of multiplicity $m_i$ and $|\xi_i|<2^{\tau+1}$ for all $i$. Hence, if $w:=|x-\xi|<2^{-L}$, then
\begin{align*}
|f(x)|&=\left|f(\xi)+\frac{f'(\xi)}{1!}\cdot w+\cdots+\frac{f^{(n)}(\xi)}{n!}\cdot w^n\right|\\
&\ge |f(\xi)|-w\cdot n^2\cdot 2^{\tau}\cdot 2^{n(\tau+1)}\ge 2^{-32(n(\tau+\log n))}.
\end{align*}
Vice versa, if we assume that $f(\xi)=0$, then $|f(x)|<w\cdot n^2\cdot 2^{n(\tau+1)}<2^{-64n(\tau+\log n)}$ for all $x$ with $|x-\xi|\le w\le 2^{-L}$. This proves the second claim. For the first claim, let $\xi_i$ and $\xi_j$ be any two distinct roots of $F$. Then, we conclude from (\ref{bound1}) that
\begin{align}\label{boundonsep2}
2^{-24n(\tau+\log n)}&<\frac{|F^{(m_i)}(\xi_i)|}{|F_N|\cdot m_i!}=\prod_{l\neq i}|\xi_i-\xi_l|^{m_l}\\
&= |\xi_i-\xi_j|^{m_j}\cdot \prod_{l\neq i,j}|\xi_i-\xi_l|^{m_l}\le |\xi_i-\xi_j|^{m_j}\cdot 2^{2N(\tau_F+1)},\nonumber
\end{align}
and thus, the first claim follows.
\end{proof}
}

From the above theorem, we conclude that, for all $j=1,\ldots,k_1$:\smallskip

\noindent $\bullet$  $p$ has at most one root $\xi$ in $I_{j}'$.\\
\noindent $\bullet$  If $p$ has a root $\xi$ in $I_{j}'$, then $\xi=x_{j}'$ and $|p(x)|<2^{-L}$ for all $x\in I_{j}'$.\\ 
\noindent $\bullet$  If $p$ has no root in $I_{j}'$, then $|p(x)|>2^{-L/4}$ for all $x\in I_{j}'$.\smallskip

Hence, we can determine the sign of $p$ at each root $x_{j}'$ of $p_1$ by evaluating $p(x)$ to an absolute error\footnote{\small For now, you may assume that we exactly evaluate $p(x)$ for some rational $x\in I_{j}'$. However, we will need the more general statement for our version of the algorithm that uses approximate arithmetic, as proposed in Section~\ref{bitcomplexity}.} of less than $2^{-L/2}$, where $x$ is an arbitrary point in $I_{j}'$. 
Let $x_{0}':=0$ and $x_{k_1+1}':=2^{\tau+1}$, and let $I_{0}'=[a_{0},b_{0}]:=[x_{0}',x_{0}']$ and $I_{k_1+1}'=[a_{k_1+1},b_{k_1+1}]:=[x_{k_1+1}',x_{k_1+1}']$ be corresponding intervals of width $0$. Notice that the values $x_{j}'$ decompose $\mathcal{I}$ into $k_1+1$ many intervals $A_j:=(x_{j-1}',x_{j}')$ such that $p$ is monotone in each interval~$A_j$. In addition, if either $p(x_{j-1}')=0$ or $p(x_{j}')=0$, then $p$ has no root in $A_j$ according to Rolle's Theorem. 
Hence, $p$ has a root $\xi$ in $A_j$ if and only $p(x_{j-1}')\cdot p(x_{j}')<0$.
 If the latter inequality holds, then $\xi$ is unique and simple. In fact, it even holds that the shortened  interval $A_{j}':=(a_{j-1},b_{1,j})\subset A_j$ isolates $\xi$ because $I_{j-1}'$ and $I_{j}'$ do not contain any root of $p$. Now, since we can compute the sign 
of $p$ at all  points $x_{j}'$, isolating intervals for the positive real roots of 
$p$ can directly be derived from the intervals $I_{j}'$. Notice that, for the 
positive roots of $p$ with multiplicity larger than $1$, isolating intervals of width 
less than $2^{-L}$ are already given. Namely, the multiple roots of $p$ are exactly 
the common roots of $p$ and $p_1$, and thus they are already isolated by some of the intervals 
$I_{j}'$. Each simple positive root of $p$ is isolated by an interval $A_j'$, which can be further refined to a width less than $2^{-L}$ using the refinement method
from Section~\ref{sec:refinement}. In summary, we have shown how to compute isolating intervals of width less than $2^{-L}$ for all roots of $p$ contained in $\mathcal{I}$ from isolating intervals of width less than $2^{-L}$ for all roots of~$p_1$ that are contained in $\mathcal{I}$.

We now recursively apply the above approach to $p_1$. More explicitly, for $j=1,\ldots,k-1$, we first compute the polynomials
\begin{align*}
&p_0:=p,\text{ }p_{j}:=x\cdot\hat{p}_{j-1}'+(i_{j}-i_{j-1})\cdot \hat{p}_j=x^{-(i_{j}-i_{j-1})+1}\cdot p_{j-1}'(x),\\&\text{where } p_{j-1}=p_{j-1}(0)+x^{i_{j}-i_{j-1}}\cdot \hat{p}_{j-1}(x),\text{ and }\hat{p}_{j-1}(0)\neq 0.
\end{align*}
Since $p_j$ is a $(k-j)$-nomial of magnitude 
$(n-i_j,\tau+j\cdot \log n)$, $p_{j}$ becomes a constant for  
$j=k-1$. Thus, computing isolating intervals of width less than $2^{-L}$ for the 
positive roots of $p_{k-1}$ is trivial. Going backwards from $j=k-1$, we can then 
iteratively compute isolating intervals $I_{j-1,1}$ to $I_{j-1,k_j}$ of width less than 
$2^{-L}$ for the roots of $p_{j-1}$ from isolating intervals $I_{j,1}$ to $I_{j,k_{j}}$ of width less than $2^{-L}$ for the roots of $p_{j}$. Notice that Theorem~\ref{evalbound} applies to $f:=p_{j-1}$, $g:=p_{j}$ and any point $x\in I_{j,i}$ 
as $I_{j,i}$ has width less than $2^{-L}$ and $L=128\cdot n\cdot (n+k\cdot \log n)\ge 128\cdot\max(\deg(p_{j-1}),\deg(p_j))\cdot(\log\max(\|p_{j-1}\|_\infty,\|p_{j}\|_\infty)+\log n)$ for all $j\le k-1$. Hence, we can compute the sign of $p_{j-1}$ at each positive root of $p_{j}$ by evaluating $p_{j-1}$ at an arbitrary point in the corresponding isolating interval. 

Notice that the above approach does not only yield isolating intervals for all real roots of $p$ but also the corresponding multiplicities. Namely, a root $\xi$ of $p$ has multiplicity $j$ if and only if $p_0(\xi)=\cdots=p_{j-1}(\xi)=0\neq p_j(\xi)$.\\
   
Before we continue with the analysis of our algorithm, we first consider a simple example to illustrate our approach. Notice that we tried to keep the formulation of our algorithm as simple as possible with the prior goal to achieve the claimed complexity bounds, however, for the cost of a probably worse performance in practice. Hence, for an actual implementation, we propose to integrate additional steps in order to avoid costly refinement steps, and thus, to considerably speed up the overall approach. We hint to such techniques in the following section. The reader who is mainly interested in the theoretical complexity bounds should feel free to skip the example and directly continue with Section~\ref{ssec:complexity1}.

\subsection{An Example and Alternatives}

Let $p(x)=x^{50}-4\cdot x^{48}+4\cdot x^{46}-x^4+4\cdot x^2-4$ be a $6$-nomial of magnitude $(n,\tau):=(50,2)$. We consider the polynomials $p_j$, with $j=0,\ldots,5$, that are defined as follows:
\begin{align*}
p_0(x)&:=x^{50}-4\cdot x^{48}+4\cdot x^{46}-x^4+4\cdot x^2-4\\
&=x^2\cdot (x^{48}-4\cdot x^{46}+4\cdot x^{44}-x^2+4)-4=x^2\cdot\hat{p}_0-4\\
p_1(x)&:=x\cdot\hat{p}_0'+2\cdot \hat{p}_0=x^{-1}\cdot p_0'(x)=\\
&=x^2\cdot(50\cdot x^{46}-192\cdot x^{44}+184\cdot x^{42}-4)+8=x^2\cdot\hat{p}_1+8\\
p_2(x)&:=x\cdot\hat{p}_1'+2\cdot \hat{p}_1=x^{-1}\cdot p_1'(x)=\\
&=x^{42}\cdot(2400\cdot x^4-8832\cdot x^2+8096)-8=x^{42}\cdot\hat{p}_2-8\\
p_3(x)&:=x\cdot\hat{p}_2'+42\cdot \hat{p}_2=x^{-41}\cdot p_2'(x)=\\
&=x^2\cdot (110400\cdot x^2-388608)+340032=x^2\cdot\hat{p}_3+340032\\
p_4(x)&:=x\cdot\hat{p}_3'+2\cdot \hat{p}_3=x^{-1}\cdot p_3'(x)\\
&=x^2\cdot 441600-777216=x^2\cdot\hat{p}_4-777216\\
p_5(x)&:=x\cdot\hat{p}_4'+2\cdot \hat{p}_4=x^{-1}\cdot p_4'(x)=883200
\end{align*}

We want to recursively isolate and approximate the positive real roots of the polynomials $p_5$ to $p_0$, starting with $p_5$. Since we are only interested in the roots of $p=p_0$, we can restrict to the interval $\mathcal{I}:=(0,8)$, which must contain all positive roots of $p$. Trivially, $p_5$ has no root, and thus, $p_4$ is monotone in $\mathcal{I}$.
Since $p_4(0)<0$ and $p_4(8)>0$, the interval $I_{4,1}:=\mathcal{I}$ isolates the unique (simple) positive real root (at $x_{4,1}\approx 1.326$) of $p_4$ in $\mathcal{I}$. The polynomial $p_3$ is monotone in each of the two intervals $(0,x_{4,1})$ and $(x_{4,1},8)$. Refining the isolating interval for $x_{4,1}$ to a width less than $2^{-L}$, with $L:=128\cdot \deg(p)\cdot(\log \|p\|_{\infty}+6\cdot\log \deg(p))\approx 8.5\cdot 10^4$, and using Theorem~\ref{evalbound}, we can evaluate the sign of $p_3$ at $x=x_{4,1}$. Since $p_3(0)>0$, $p_3(x_{4,1})\approx -1943<0$, and $p_3(8)>0$, each of the two intervals $I_{3,1}:=(0,x_{4,1})$ and $I_{3,2}=(x_{4,1},8)$ isolates a (simple) positive real root (at $x_{3,1}\approx 1.275$ and at $x_{3,2}\approx 1.375$) of $p_3$. The polynomial $p_2$ is monotone in each of the three intervals $(0,x_{3,1})$, $(x_{3,1},x_{3,2})$, and $(x_{3,2},8)$.
We again refine the isolating intervals for $x_{3,1}$ and $x_{3,2}$ to a width less than $2^{-L}$ and evaluate the sign of $p_2$ at the points $x=0$, $x=8$, and at the roots of $p_3$. From the latter evaluations, we conclude that $p_2$ has exactly three positive (simple) real roots (at $x_{2,1}:=0.869\ldots$, $x_{2,2}\approx 1.315$, and at $x_{2,3}\approx 1.396$), which are isolated by the intervals $I_{2,1}:=(0,x_{3,1})$, $I_{2,2}:=(x_{3,1},x_{3,2})$, and $I_{2,3}:=(x_{3,2},8)$, respectively.
Refining the isolating intervals for $x_{2,1}$, $x_{2,2}$, and $x_{2,3}$ to a width less than $2^{-L}$ again allows us to evaluate the sign of $p_1$ at at the roots of $p_2$. The latter computation shows that $p_1$ has exactly two (simple) positive real roots in $\mathcal{I}$ (at $x_{1,1}\approx 1.356$ and at $x_{1,2}\approx 1.414$), which are isolated by the intervals $(x_{2,2},x_{2,3})$ and $(x_{2,3},8)$, respectively. Eventually, we refine the intervals to a width less than $2^{-L}$ and evaluate the sign of $p_0=p$ at the roots of $p_1$. We have $p_0(x_{1,1})\approx 3\cdot 10^4$ and $p_0(x)<2^{-L}$, where $x$ has been arbitrary chosen from the isolating interval for $x_{1,2}$. Hence, from Theorem~\ref{evalbound}, we conclude that $p_0(x_{1,2})=0$, and thus, $x_{0,1}:=x_{1,2}$ is the unique positive real root of $p$. In addition, $x_{0,1}$ has multiplicity~$2$.\medskip

Notice that, in each except the last step (i.e.~for $j=2,\ldots,5$), we could consider an alternative approach, where we simultaneously refine the isolating interval for a root $\xi$ of $p_{j}$ and use interval arithmetic to evaluate the sign of $p_{j-1}(\xi)$. Following the analysis in~\cite[Section 4]{qir-kerber-11}, one can show that, if $p_{j-1}(\xi)\neq 0$, then this approach yields the correct sign as soon as the interval has been refined to a width less than $2^{-L'}$, with $L'=\deg(p_{j-1})\cdot(4+\log\max(1,|\xi|))-\log|p_{j-1}(\xi)|+\tau$. For instance, in our example above, the sign of $p_3$ at the root $x_{4,1}$ of $p_4$ can be determined from an isolating interval for $x_{4,1}$ of width less than $2^{-11}$ (compared to the theoretical bound of approximate size $2^{- 8.5\cdot 10^4}$ from Theorem~\ref{evalbound}).
Hence, for a practical implementation, we strongly recommend to integrate such techniques to rule out easy cases in a more efficient way.
However, for deciding that $p_0$ evaluates to zero at $x=x_{1,2}$, methods that are purely based on approximate computation will not work.\footnote{\small That is, without computing an explicit theoretical evaluation bound $2^{-L}$ as given in Theorem~\ref{evalbound}. Certainly, if one is willing to use such a bound, then also numerical computation will yield the correct result as soon as the interval has size less than $2^{-L}$ and the precision of the approximate arithmetic is large enough to guarantee an absolute error of less than $2^{-L/2}$.} One possible way, as proposed in our algorithm, is to refine the isolating interval for $x_{1,2}$ to a sufficiently small size, to evaluate $p$ at an arbitrary point that is contained in the isolating interval, and to apply Theorem~\ref{evalbound}. 
Another way is to compute the square-free part $g^*$ of the greatest common divisor $g:=\gcd(p_{j-1},p_{j})$ of $p_{j-1}$ and $p_{j}$ and to check whether $g^*$ changes signs at the endpoints of the isolating interval. The advantage of the latter approach is that $p_{j-1}$ and $p_{j}$ typically do not share a common non-trivial factor, which can be easily checked via modular computation, and thus, it suffices to use interval arithmetic to compute the sign of $p_{j-1}$ at the roots of $p_{j}$. 

However, although the second approach, which is based on computing $g^*$, seems to be more efficient in practice, there is a severe drawback with respect to its arithmetic complexity. Namely, the considered symbolic computations need a number of arithmetic operations that is super-linear in the degree of the involved polynomials. In contrast, we will show that the first approach, which is entirely based on refinement and evaluation, only uses a number of arithmetic operations that is polynomial in the input size of the sparse representation of the input polynomial.

\subsection{Arithmetic Complexity}\label{ssec:complexity1}

For an arbitrary $k$-nomial $p\in\Z[x]$ of magnitude $(n,\tau)$, suppose that each simple positive root $\xi$ of $p$ is already isolated by a corresponding interval $I=(a,b)\subset\mathcal{I} =(0,2^{\tau+1})$ with rational endpoints $a$ and $b$. In Section~\ref{sec:refinement}, we give an algorithm to refine \emph{all} such isolating intervals to a width less than $2^{-L}$ using $O(k^2\cdot (\log(n\tau)+\log L)\cdot\log n)$ arithmetic operations over $\Q$.
Hence, from the definition of our algorithm for root isolation, we conclude that we can compute isolating intervals of width less than $2^{-L}$, with $L:=128\cdot n\cdot(n+k\cdot \log n)$, for all roots of $p_{j-1}$ contained in $\mathcal{I}$ from isolating intervals of size less than $2^{-L}$ for the 
roots of $p_{j}$ contained in $\mathcal{I}$ using only $O(k^2\cdot \log(n\tau)\cdot\log n)$ many 
arithmetic operations: Namely, we can compute $p_j$ from $p_{j-1}$ with $k$ multiplications and $k$ additions. For evaluating $p_{j-1}$ at an arbitrary point $x\in\Q$, we need 
at most $2k\log n$ arithmetic operations since we can compute $x^i$ with less than 
$2\log i$ multiplications by repeated squaring (e.g.~$x^{11}=x\cdot x^2\cdot 
((x^2)^2)^2$) and $p_{j-1}$ has at most $k$ non-vanishing coefficients. We have shown that 
evaluating the sign of $p_{j-1}$ at each root $\xi\in \mathcal{I}$ of $p_j$ can be reduced to the evaluation of $p_{j-1}$ 
at an arbitrary (rational) point $x\in I'$, where $I'$ is an isolating interval for $\xi$. Hence, the latter evaluations need at 
most $(k+1)\cdot (2k\log n)$ many arithmetic operations as each polynomial $p_j$ is a $(k-j)$-nomial of magnitude $(n-i_j,\tau+j\cdot\log n)$. Finally, the refinement of 
the isolating intervals for the simple positive roots of $p_{j-1}$ needs $O(k^2\cdot 
\log(n\tau)\cdot\log n)$ many arithmetic operations.  Hence, the total number of arithmetic operations is bounded by
\begin{align}\nonumber
&k\cdot\left(3k+2k^2\log n+O(k^2\cdot \log(n\tau)\cdot\log n)\right).
\end{align}

We fix this result:

\begin{theorem}\label{thm:main1}
Let $p\in\Z[x]$ be a $k$-nomial of magnitude $(n,\tau)$, then all real roots of $p$ can be isolated with $O(k^3\cdot \log(n\tau)\cdot\log n)$ arithmetic operations over the rational numbers.
\end{theorem}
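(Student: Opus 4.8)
The plan is to convert the recursive scheme sketched above into a clean induction on the number $k$ of monomials, and then simply to tally the arithmetic cost of one level of the recursion and multiply by $k$. Most ingredients are already in place, so the analysis essentially reduces to two checks: that the \emph{single} global precision parameter $L$ is large enough for Theorem~\ref{evalbound} to apply at every level, and that the refinement subroutine of Section~\ref{sec:refinement} can be invoked as a black box with the stated cost.

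First I would dispose of the cheap preprocessing: divide out $x^{i_0}$ (which contributes only the root $0$ with multiplicity $i_0$), restrict the search to $\mathcal I=(0,2^{\tau+1})$ via Cauchy's bound, and handle the negative roots by re-running everything on $p(-x)$; this costs $O(k)$ operations. Then I would set up the chain $p_0:=p,\,p_1,\dots,p_{k-1}$ with $p_j:=x^{\,1-(i_j-i_{j-1})}\,p_{j-1}'(x)$, recording that $p_j$ is a $(k-j)$-nomial of magnitude $(n-i_j,\tau+j\log n)$ with nonzero constant term and $\hat p_{j-1}(0)\neq 0$, that passing from $p_{j-1}$ to $p_j$ costs $k$ multiplications and $k$ additions, and that $p_{k-1}$ is a nonzero constant with no positive roots in $\mathcal I$ --- the base case, trivially isolated.

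For the inductive step, assume isolating intervals of width $<2^{-L}$ for all roots of $p_j$ in $\mathcal I$, with $L:=128\,n\,(n+k\log n)$. Applying Theorem~\ref{evalbound} to $f:=p_{j-1}$ and $g:=p_j$ (valid for every $j\le k-1$ by the stated size of $L$) yields that $p_{j-1}$ has at most one root in each isolating interval of $p_j$ and that its sign there is decided by any evaluation of absolute error $<2^{-L/2}$; moreover $p_{j-1}$ is monotone between consecutive roots of $p_j$ (including the artificial endpoints $0$ and $2^{\tau+1}$), so Rolle's theorem forces every remaining root of $p_{j-1}$ to be simple, to lie precisely in those gaps where $p_{j-1}$ changes sign, and to be isolated by such a gap truncated to the inner endpoints of the two neighbouring isolating intervals. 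The multiple roots of $p_{j-1}$ are exactly the common roots of $p_{j-1}$ and $p_j$, hence already isolated to width $<2^{-L}$; the simple ones are then refined to width $<2^{-L}$ by the algorithm of Section~\ref{sec:refinement}. Multiplicities come for free, since $\operatorname{mult}(\xi,p)$ is the largest $j$ with $p_0(\xi)=\dots=p_{j-1}(\xi)=0$.

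Finally I would count. One evaluation of a $(k-j)$-nomial at a rational point takes at most $2k\log n$ operations (repeated squaring of each of the $\le k$ powers, then a summation), so the sign determinations at all $\le k$ roots of $p_j$ plus the two endpoints cost $O(k^2\log n)$; forming $p_j$ from $p_{j-1}$ costs $O(k)$; and, by Section~\ref{sec:refinement}, refining the $\le k$ isolating intervals for the simple roots of $p_{j-1}$ costs $O\!\big(k^2(\log(n\tau)+\log L)\log n\big)=O(k^2\log(n\tau)\log n)$, since $L=128\,n\,(n+k\log n)=O(n^2\log n)$ and hence $\log L=O(\log n)$. Summing this per-level bound of $O(k^2\log(n\tau)\log n)$ over the $k$ levels gives the claimed $O(k^3\log(n\tau)\log n)$ arithmetic operations over $\Q$. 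The genuinely hard part is not this accounting but the two results it rests on: the near-linear-precision separation/evaluation bound of Theorem~\ref{evalbound}, and --- above all --- the refinement procedure of Section~\ref{sec:refinement}, which must drive all simple isolating intervals below width $2^{-L}$ within $O(k^2(\log(n\tau)+\log L)\log n)$ operations.
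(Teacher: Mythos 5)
Your proposal is correct and follows essentially the same route as the paper: the same chain $p_0,\dots,p_{k-1}$ built via $p_j = x^{1-(i_j-i_{j-1})}p_{j-1}'$, the same appeal to Theorem~\ref{evalbound} with $f:=p_{j-1}$, $g:=p_j$, the same accounting (repeated-squaring evaluation at $O(k\log n)$ operations, $O(k)$ to form $p_j$, $O(k^2(\log(n\tau)+\log L)\log n)$ for refinement via Corollary~\ref{cor:arithmeticcomplexity}), and the same summation over $k$ levels. Your explicit note that $L=128n(n+k\log n)=O(n^2\log n)$ so $\log L=O(\log n)$ makes precise what the paper leaves implicit when collapsing $O(k^2(\log(n\tau)+\log L)\log n)$ to $O(k^2\log(n\tau)\log n)$.
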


Notice that, from the latter theorem, we can immediately derive a corresponding result for polynomials $p(x)=\sum_{i=0}^{k-1}a_{i_l}x^{i_l}\in\Q[x]$ with rational 
coefficients. Namely, suppose that $a_{i_1}=\frac{p_{i_l}}{q_{i_l}}$, 
with integers $p_{i_l}$ and $q_{i_l}$ of absolute values less than $2^\tau$. Then, $P(x):=p(x)\cdot \prod_{l=0}^{k-1}q_{i_l}\in\Z[x]$ is a $k$-nomial of magnitude $(n,k\tau)$ that has the same roots as $p(x)$. Since $P$ can be compute from $p$ using less than $2k$ multiplications, we conclude from Theorem~\ref{thm:main1}:

\begin{corollary}
Let $p(x)=\sum_{l=0}^{k-1}a_{i_l}x^{i_l}\in\Q[x]$ be a polynomial with rational coefficients of the form $a_{i_l}=\frac{p_{i_l}}{q_{i_l}}$, where $p_{i_l},q_{i_l}\in\Z$ and $|p_{i_l}|,|q_{i_l}|<2^\tau$ for all $l=0,\ldots,k-1$. Then, all real roots of $p$ can be isolated with $O(k^3\cdot \log(k n\tau)\cdot\log n)$ arithmetic operations over the rational numbers.
\end{corollary}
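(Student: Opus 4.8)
The plan is to reduce the rational-coefficient case to the integer-coefficient case already settled in Theorem~\ref{thm:main1}, by clearing all denominators simultaneously. First I would set $Q := \prod_{l=0}^{k-1} q_{i_l} \in \Z \setminus \{0\}$ and define $P(x) := Q \cdot p(x)$. Since every $q_{i_l}$ is nonzero, $P$ has exactly the same real roots as $p$, with the same multiplicities; hence it suffices to isolate the real roots of $P$. The point requiring a small argument is that $P$ is again a $k$-nomial of controlled magnitude: its support coincides with that of $p$, and its coefficient at $x^{i_l}$ equals $p_{i_l} \cdot \prod_{j \neq l} q_{i_j}$, an integer that is a product of at most $k$ integers each of absolute value below $2^\tau$, hence of absolute value below $2^{k\tau}$. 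Thus $P$ is a $k$-nomial of magnitude $(n, k\tau)$.

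Next I would bound the cost of the preprocessing. Computing $P$ from the sparse representation of $p$ takes only $O(k)$ arithmetic operations over $\Q$: one computes all prefix and suffix products of the sequence $q_{i_0}, \ldots, q_{i_{k-1}}$ using fewer than $2k$ multiplications, and then obtains each coefficient $p_{i_l} \cdot \prod_{j \neq l} q_{i_j}$ with one additional multiplication. This is dominated by the main bound below, so it does not need to be tracked carefully.

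Finally, applying Theorem~\ref{thm:main1} to the $k$-nomial $P$ of magnitude $(n, k\tau)$ isolates all real roots of $P$ --- equivalently, all real roots of $p$ --- using $O(k^3 \cdot \log(n \cdot k\tau) \cdot \log n) = O(k^3 \cdot \log(kn\tau) \cdot \log n)$ arithmetic operations over $\Q$; adding the $O(k)$ operations of the preprocessing does not change this bound. I expect no real obstacle here: Theorem~\ref{thm:main1} does all the work, and since its dependence on the coefficient bit-size enters only logarithmically, enlarging $\tau$ to $k\tau$ merely turns the factor $\log(n\tau)$ into $\log(kn\tau)$. The only thing to be slightly careful about is the magnitude bookkeeping for $P$, i.e.\ that clearing all $k$ denominators at once inflates the coefficient size only by the factor $k$ rather than something worse.
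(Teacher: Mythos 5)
Your proposal is correct and follows essentially the same route as the paper: clearing all $k$ denominators via $P(x) := p(x) \cdot \prod_{l=0}^{k-1} q_{i_l}$, observing that $P$ is a $k$-nomial of magnitude $(n, k\tau)$ with the same roots as $p$, and invoking Theorem~\ref{thm:main1}. The paper notes the same $O(k)$-multiplication preprocessing cost; your additional remark about prefix/suffix products is a harmless elaboration.
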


\section{Root Refinement}\label{sec:refinement}

Throughout the following considerations, let $p(x)$ be a $k$-nomial of magnitude $(n,\tau)$ as in (\ref{polyf}), and let $I_0=(a_0,b_0)\subset\mathcal{I}=(0,2^{\tau+1})$, with $a_0,b_0\in\Q$, be an isolating interval for a simple real root 
$\xi$ of~$p$. For a given positive integer $L\in\Z$, we aim to refine $I_0$ to a width less than $2^{-L}$. Our refinement 
method is almost identical to a combination of the \emph{Newton-} and the \emph{Boundary-Test} as proposed in a very 
recent paper~\cite{DBLP:journals/corr/SagraloffM13} on real root isolation, however, we slightly modify the latter approach in order to exploit the sparsity of $p$. That is, for testing an interval $I\subset I_0$ for the 
existence of a root, we replace a test based on Descartes' Rule of Signs (see~Theorem~\ref{propertiesvar}) by a simple sign evaluation of $p$ at the endpoints of 
$I$. For refining $I_0$, this is possible as $I_0$ is assumed to be isolating for a simple root, whereas the method from~\cite{DBLP:journals/corr/SagraloffM13} has to process arbitrary intervals for which no further information is provided.\footnote{\small The Newton-Test from~\cite[Section 3.2]{DBLP:journals/corr/SagraloffM13} is a crucial subroutine within the root isolation algorithm \textsc{ANewDsc}. It guarantees that, during the isolation process, clusters of roots are automatically detected and further approximated in an efficient manner. In this setting, the Newton-Test applies to arbitrary intervals $I$ that are not known to be isolating yet. Notice that, for the refinement of $I_0$, we cannot directly use the original Newton-Test from~\cite{DBLP:journals/corr/SagraloffM13}. Namely, in general, the polynomial $p_I$ from (\ref{polypI}) is not sparse anymore, even for small~$k$, and thus, we would need a super linear number of arithmetic operations to compute $\var(p,I)$ (see Thm.~\ref{propertiesvar} for definitions). However, when refining an interval $I_{0}$ that is known to isolate a simple real root $\xi$ of $p$, we can test a subinterval $I\subset I_{0}$ for being isolating with only two evaluations of $p$. Namely, $I$ isolates $\xi$ if and only if $p(a)\cdot p(b)<0$.} 

For the sake of a self-contained presentation, we briefly review some basic facts about Descartes' Rule of Signs before presenting the refinement algorithm; for an extensive treatment of the Descartes method, we refer to~\cite{Collins-Akritas,eigenwillig-phd,ESY06,Sagraloff12,DBLP:journals/corr/SagraloffM13}:

\begin{figure}[t]
\begin{center}
\includegraphics[width=4.5cm]{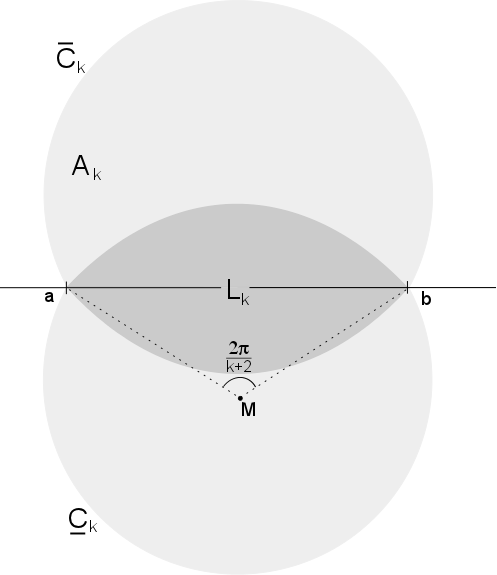}\end{center}
\caption{\label{fig:Obreshkoff} For an arbitrary integer $k\in\{0,\ldots,n\}$, let 
$\overline{C}_k$ and $\underline{C}_k$ for $I:=(a,b)$ have the endpoints of $I$ on their
boundaries; their centers see the line segment $\overline{ab}$ under the angle $\frac{2\pi}{k+2}$.  
The \emph{Obreshkoff lens} $L_k$ is the interior of $\overline{C} \cap
\underline{C}_k$, and the \emph{Obreshkoff area} $A_k$ is the interior of $\overline{C}_k \cup
\underline{C}_k$. $A_0$ and $A_1$ are called the \emph{One-} and \emph{Two-Circle Regions} of $I$, respectively.\vspace{-0.25cm}}
\end{figure}

For an arbitrary interval $I=(a,b)$, we denote $\var(p,I)$ the number of
sign variations in the coefficient sequence $(a_{I,0},\ldots,a_{I,n})$ (after removing all zero-entries) of the polynomial
\begin{align}\label{polypI}
p_I(x)=\sum_{i=0}^n a_{I,i} x^i:=(x+1)^n\cdot f\left(\frac{a\cdot x+b}{x+1}\right).
\end{align}
The polynomial $p_I$ is computed from $p$ via the M\"obius transformation that maps a point $x\in\C\backslash \{-1\}$ to $\frac{a\cdot x+b}{x+1}\in\C$, followed by multiplication with $(x+1)^n$. Notice that the latter step ensures that denominators in $p((ax+b)/(x+1))$ are cleared. There is a one-to-one correspondence (preserving multiplicities) between the positive real roots of $p_I$ and the roots of $p$ in $I$. In addition, according to Descartes' Rule of Signs, $v:=\var(p,I)$ is an upper bound on the number $m$ of real roots of $p$ in $I$ and $v-m$ is an even integer. 
The function $\var(p,\cdot)$ has several further important properties:

\begin{theorem}\label{propertiesvar}\cite{eigenwillig-phd,Obreschkoff:book,Obrechkoff:book-english}\quad
Let $I=(a,b)$ be an arbitrary interval, and let $L_i$ and $A_i$, with $i=0,1,\ldots,n$, be the Obreshkoff regions in $\C$ as defined in Figure~\ref{fig:Obreshkoff}. Then, it holds that (roots are counted with multiplicity):
\begin{itemize}
\item[(a)]
\# roots contained in $L_n$ $\le$ $\var(p,I)$ $\le$ \# roots contained in~$A_n$
\item[({b})] If $p$ contains no root in $A_0$, then $\var(p,I)=0$. If $A_1$ contains exactly one root, then $\var(p,I)=1$.
\item[({c})] If $I_1$ and $I_2$ are two disjoint subintervals of $I$, then
\[
\var(p,I_1) + var(p,I_2) \le \var(p,I).
\]
\end{itemize}
\end{theorem}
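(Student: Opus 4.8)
All three facts are classical (see~\cite{eigenwillig-phd,Obreschkoff:book,Obrechkoff:book-english}); here is the route I would take. The plan is to transport everything through the M\"obius transformation $\phi_I\colon z\mapsto\frac{az+b}{z+1}$ underlying~(\ref{polypI}) and then argue entirely about the sign-variation count of the real polynomial $q:=p_I$. Note that $\phi_I$ is a bijection of $\C\cup\{\infty\}$ with $\phi_I(0)=b$, $\phi_I(\infty)=a$, $\phi_I(1)=\tfrac{a+b}{2}$, mapping the positive real ray onto $(a,b)$; since circles and lines through $a$ and $b$ pull back to lines through the origin, a short computation with the inscribed-angle theorem identifies $\phi_I^{-1}(L_k)$ with the open sector $S_k:=\{z\neq 0:|\arg z|<\tfrac{\pi}{k+2}\}$ and $\phi_I^{-1}(A_k)$ with the open sector $\hat{S}_k:=\{z\neq 0:|\arg z|<\pi-\tfrac{\pi}{k+2}\}$ about the positive real axis; in particular $\hat{S}_0$ is the right half-plane. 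Since $\phi_I$ preserves multiplicities of roots, parts~(a) and~(b) turn into statements comparing $\var(q)$ with the number of roots of $q$ in $S_n$, $\hat{S}_n$, $\hat{S}_0$ and $\hat{S}_1$.

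For the \emph{upper bounds} (the right inequality of~(a), and~(b)) I would use two elementary facts about sign variations: $\var$ is subadditive under multiplication, $\var(fg)\le\var(f)+\var(g)$; and a real polynomial all of whose roots lie in the closed double-wedge $\{z:|\arg z|\ge\pi-\tfrac{\pi}{n+2}\}$ about the negative real axis has a one-signed coefficient sequence, hence $\var=0$ (each real linear factor is $x+r$ with $r>0$, and each real quadratic factor is $x^2-2R\cos\gamma\,x+R^2$ with $\cos\gamma\le-\cos\tfrac{\pi}{n+2}<0$, so all factors have nonnegative coefficients). Factoring $q=q_1q_0$ with $q_1$ carrying the roots of $q$ inside $\hat{S}_n$ and $q_0$ the remaining roots then gives $\var(q_0)=0$ and $\var(q_1)\le\deg q_1=\#\{\text{roots of }p\text{ in }A_n\}$, whence $\var(p,I)=\var(q)\le\#\{\text{roots in }A_n\}$. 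The same factorization with $\hat{S}_0$ in place of $\hat{S}_n$ yields the first assertion of~(b). For the second, a single root of the real polynomial $q$ in the symmetric sector $\hat{S}_1$ is forced to be a simple positive real root $\rho$ (a non-real root would drag in its conjugate), so $q=(x-\rho)q_0$ with $\var(q_0)=0$ since the roots of $q_0$ lie in $\{|\arg z|\ge 2\pi/3\}$, and hence $1\le\var(q)\le\var(x-\rho)+\var(q_0)=1$.

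The \emph{lower bound} in~(a), $\#\{\text{roots in }L_n\}\le\var(p,I)$, is the subtle half, and I expect it to be the main obstacle: in the $\phi_I$-picture it asks that $\var(q)\ge m$ whenever $q$ has $m$ roots (counted with multiplicity) in the thin sector $S_n$. When those roots are real this is exactly Descartes' Rule of Signs; Obreshkoff's theorem is precisely the strengthening that also accounts for the complex roots in $S_n$, and here I would invoke~\cite{eigenwillig-phd,Obreschkoff:book,Obrechkoff:book-english}. One route is an induction on $n$: one shows that if $L_n(I)$ contains $m$ roots of $p$ then $L_{n-1}(I)$ contains at least $m-1$ roots of $p'$, and combines this with a Budan--Fourier-type comparison of $\var(p,I)$ with $\var(p',I)$; the delicate points are the boundary behaviour of the Obreshkoff regions and the way $\var$ can jump when coefficients vanish.

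Finally, part~(c) is independent of~(a)--(b), and I would prove it by reducing to the classical \emph{superadditivity of $\var$ under a single subdivision}: for any $a<c<b$ one has $\var(p,(a,b))\ge\var(p,(a,c))+\var(p,(c,b))$, the additivity property underpinning the Descartes subdivision method, which is proved directly on the coefficient sequences after the M\"obius normalization (\cite{Collins-Akritas,eigenwillig-phd}). Granting it, $\var(p,\cdot)$ is monotone under shrinking one endpoint ($\var(p,(a,c))\ge\var(p,(a',c))$ for $a\le a'<c$, by subdividing at $a'$ and discarding a nonnegative summand), and the general claim follows by chaining: for disjoint $I_1=(a_1,b_1)$ and $I_2=(a_2,b_2)$ inside $I=(a,b)$ with, w.l.o.g., $a\le a_1<b_1\le a_2<b_2\le b$, subdivide $I$ at $b_1$ and then shrink the two resulting subintervals down to $I_1$ and $I_2$, keeping only the two desired terms.
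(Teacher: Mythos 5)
The paper gives no proof of this theorem; it is stated as a classical result with citations, so there is nothing in the paper to compare against. Your overall route---transporting through the M\"obius map $\phi_I$, identifying $L_k$ and $A_k$ with the sectors $\{|\arg z|<\pi/(k+2)\}$ and $\{|\arg z|<\pi-\pi/(k+2)\}$, deferring the lower bound of (a) to the cited references, and deriving (c) from superadditivity of $\var$ under subdivision---is the standard one, and the sector identification and the treatment of (c) are fine.

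However, your argument for the upper bounds contains a genuine error: $\var$ is \emph{not} subadditive under multiplication. For $f(x)=x^{2}+1$ and $g(x)=x-1$ one has $\var(f)+\var(g)=1$ but $\var(fg)=\var(x^{3}-x^{2}+x-1)=3$; and even the special case you actually use, $\var(q_1q_0)\le\var(q_1)$ when $q_0$ has one-signed coefficients, fails: $q_1=x-1$, $q_0=x^{2}+\tfrac12 x+1$ give $\var(q_1q_0)=\var(x^{3}-\tfrac12 x^{2}+\tfrac12 x-1)=3$. What makes the factorization work is not a general multiplicativity of $\var$ but the degree-dependent Obreshkoff lemma: multiplying a real polynomial by $x+r$ with $r>0$, or by a conjugate quadratic whose roots lie in the wedge $\{|\arg(-z)|\le\pi/(n+2)\}$ where $n$ is the degree of the \emph{product}, does not increase $\var$. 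This lemma---which is exactly what the angle condition $\cos\gamma\le-\cos\frac{\pi}{n+2}$ is there for---is the substance of the upper half of Obreshkoff's theorem, and it is of the same order of difficulty as the lower bound you rightly defer to \cite{eigenwillig-phd,Obreschkoff:book,Obrechkoff:book-english}; one cannot substitute it with subadditivity. The first assertion of (b) is the exception: there $q_1$ is constant and $\var(q_0)=0$ follows directly from the one-signed coefficient sequence, so that piece of your argument stands without any multiplicativity claim.
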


From Theorem~\ref{propertiesvar} (c), we conclude that, for any interval $I=(a,b)\subset \R_{>0}$ on the positive real axis, $\var(p,I)$ is upper bounded by $\var(p,(0,b))=\var(p)$, and thus, $\var(p,I)\le k-1$. In particular, we have $\var(p,I_0)\le k-1$. Hence,  part (a) of Theorem~\ref{propertiesvar} implies that the Obreshkoff lens $L_n$ of $I_0$ contains at most $k-1$ roots of $p$.

We can now formulate our refinement method. As mentioned above, it is almost identical to the approach presented in~\cite[Section 3.2]{DBLP:journals/corr/SagraloffM13}, hence we keep the presentation at hand as short as possible and refer to the corresponding paper for more explanations and
for a discussion that motivates the approach:

The main idea is to iteratively refine $I_0$ such that, in each iteration, we replace an isolating interval $I=(a,b)\subset I_0$ by an isolating interval $I'=(a',b')\subset I$ of considerably smaller width, and with $a',b'\in\Q$. For this, we use two main ingredients, namely, sign evaluation of $p$ at the endpoints of $I'$ in order to test $I'$ for the existence of a root, and a subdivision strategy based on Newton iteration and bisection, which guarantees quadratic convergence in the majority of all step. We give details:\medskip 

\noindent\textbf{Algorithm} \textsc{NewRefine} (read Newton-Refine)\medskip

\noindent Input: An interval $I_0=(a_0,b_0)\subset \mathcal{I}=(0,2^{\tau+1})$, with endpoints $a_0,b_0\in\Q$, that isolates a simple root $\xi\in\R$ of a polynomial $p\in\Z[x]$, and a positive integer $L$.\\
Output: An interval $I=(a,b)\subset I_0$, with endpoints $a,b\in\Q$, of width less than $2^{-L}$ that isolates $\xi$.\medskip
 
In each step of the recursion, we store a pair $\mathcal{A}:=(I,N_I)$, where we initially set $\mathcal{A}:=(I_0,N_{I_0})$, with $N_{I_0}:=4$. We first try to compute a subinterval $I'\subset I$ of width $w(I')$, with $\frac{w(I)}{8N_I}\le w(I')\le \frac{w(I)}{N_I}$, that contains the unique root $\xi$. This is done via two tests, that is, the \texttt{Newton-Test\_signat} and the \texttt{Boundary-Test\_signat}.\footnote{\small In order to distinguish the tests from their counterparts in~\cite{DBLP:journals/corr/SagraloffM13}, we use the affix \texttt{\_signat}, which refers to the sign evaluation of $p$ at the endpoints of an interval $I$ in order to test $I$ for the existence of a root. We also remark that we directly give both tests in its full generality. That is, at several places, we use approximate arithmetic, and, in addition, we allow to choose an arbitrary point $m_i$ from $\multipoint{m}{\delta}$, where $\multipoint{m}{\delta}:=\set{m_i:=m+(i-\lceil k/2\rceil)\cdot \delta}{i=0,\ldots,2\cdot \lceil k/2\rceil}$ is a set of $2\cdot\lceil k/2\rceil +1$ points that are clustered at $m$. For now, the reader may assume that we always choose $m_i=m$, and that exact arithmetic over rational numbers is used. However, for our variant of the root isolation algorithm that uses approximate arithmetic (see Section~\ref{bitcomplexity}), we will exploit the fact that we can choose a point $m_i$ from $\multipoint{m}{\delta}$ for which $|p(m_i)|$ becomes large. This guarantees that the precision does not become unnecessarily large.\label{footnote:apx}}
\medskip   

\hrule\nopagebreak\medskip
\noindent \texttt{Newton-Test\_signat}:
Consider the points $\xi_{1}:=a+\frac{1}{4}\cdot w(I)$, $\xi_{2}:=a+\frac{1}{2}\cdot w(I)$, $\xi_{3}:=a+\frac{3}{4}\cdot w(I)$, and let $\epsilon := 2^{-\ceil{5 + \log n}}$.
For $j = 1,2,3$, we choose arbitrary points (see Footnote~\ref{footnote:apx})
\begin{equation}\label{Newtonmultipoint1}
\xi_{j}^{*}\in \multipoint{\xi_{j}}{\epsilon \cdot w(I)},
\end{equation} 
where, for an arbitrary $m\in\R$ and an arbitrary $\delta\in\R_{>0}$, we define 
$$\multipoint{m}{\delta}:=\set{m_i:=m+(i-\lceil k/2\rceil)\cdot \delta}{i=0,\ldots,2\cdot \lceil k/2\rceil}.$$
The points $\xi_j^*$ define values
$
v_{j}:=\frac{p(\xi_{j}^{*})}{p'(\xi_{j}^{*})}.
$
Then, for the three distinct pairs of indices $i,j\in\{1,2,3\}$ with $i<j$, we perform the following computations in parallel: 
For $L=1,2,4,\ldots$, we  compute 
approximations of $p(\xi^{*}_{i})$, $p(\xi^{*}_{j})$, $p'(\xi^{*}_{i})$, and 
$p'(\xi^{*}_{j})$ to $L$ bits after the binary point; see Footnote~\ref{footnote:apx}. We stop doubling $L$ for a particular pair $(i,j)$ if we can 
either verify that 
\begin{align}\label{condition1}
|v_{i}|,|v_{j}|>w(I)\quad\text{or}\quad |v_{i}-v_{j}|<\frac{w(I)}{4n}\vspace{-0.2cm}
\end{align} 
or that\vspace{-0.2cm}
\begin{align}\label{condition2}
|v_{i}|,|v_{j}|<2\cdot w(I)\quad\text{and}\quad |v_{i}-v_{j}|>\frac{w(I)}{8n}.
\end{align} 

If (\ref{condition1}) holds, we discard the pair $(i,j)$. Otherwise, we compute sufficiently good
approximations (see Footnote~\ref{footnote:apx}) of the values $p(\xi^{*}_{i})$, $p(\xi^{*}_{j})$, $p'(\xi^{*}_{i})$, and $p'(\xi^{*}_{j})$, such that we can derive an approximation $\tilde{\lambda}_{i,j}$ of\vspace{-0.25cm}
\begin{align}\label{def:lambda}
\lambda_{i,j}:=\xi_{i}^{*}+\frac{\xi^{*}_{j}-\xi^{*}_{i}}{v_{j}-v_{j}}\cdot v_{i}
\end{align}
with $|\tilde{\lambda}_{i,j}-\lambda_{i,j}|\le \frac{1}{32N_{I}}$.
If $\tilde{\lambda}_{i,j} \not\in[a,b]$, we discard the pair $(i,j)$. Otherwise, let $\ell_{i,j}:=\floor{\frac{4N_I\cdot(\tilde{\lambda}_{i,j}-a)}{w(I)}}$. Then, it holds that $\ell_{i,j} \in  \{0,\ldots,4N_{I}\}$. We further define
\begin{align*}
I_{i,j}&:=(a_{i,j},b_{i,j})\\
:=&\left(a+\max(0,\ell_{i,j}-1)\cdot\frac{w(I)}{4N_{I}},a+\min(4N_{I},\ell_{i,j}+2)\cdot\frac{w(I)}{4N_{I}}\right).
\end{align*}
If $a_{i,j}=a$, we set $a_{i,j}^{*}:=a$, and if $b_{i,j}=b$, we set $b_{i,j}^*:=b$. For all other values for $a_{i,j}$ and $b_{i,j}$, we choose arbitrary points (see Footnote~\ref{footnote:apx})
\begin{equation}\label{Newtonmultipoint2}
a_{i,j}^{*}\in \multipoint{a_{i,j}}{\epsilon\cdot \frac{w(I)}{N_I}}
\quad\text{and}\quad
b_{i,j}^{*}\in \multipoint{b_{i,j}}{\epsilon\cdot \frac{w(I)}{N_I}}.
\end{equation}
We define $I':=I_{i,j}^{*}:=(a_{i,j}^{*},b_{i,j}^{*})$. Notice that $I'$ is contained in $I$, and it holds that $\frac{w(I)}{8N_{I}}\le w(I')\le \frac{w(I)}{N_{I}}$. In addition, if the endpoints of $I$ are dyadic, then the endpoints of~$I'$ are dyadic as well. 

In the final step, we compute the sign of $p(a_{i,j}^{*})$ and $p(b_{i,j}^{*})$. $I'$ is isolating for $\xi$ if and only if $p(a_{i,j}^{*})\cdot p(b_{i,j}^{*})<0$, hence, if the latter inequality is fulfilled, we return $I'$. Otherwise, we discard $(i,j)$.

We say that the \texttt{Newton-Test\_signat} succeeds if it returns an interval $I'=I_{i,j}^{*}$ for at least one of the three pairs $(i,j)$. If we obtain an interval for more than one pair, we can output either one of them. Otherwise, the test fails.

\medskip
\hrule\medskip

If the \texttt{Newton-Test\_signat} succeeds, we replace $\mathcal{A}=(I,N_I)$ by $\mathcal{A}:=(I',N_{I'})$, with $N_{I'}:=N_I^2$. If the \texttt{Newton-Test\_signat} fails, we continue with the so-called \texttt{Boundary-Test\_signat}. Essentially, it checks whether $\xi$ is located very close to one of the endpoints of $I$.\medskip

\hrule \nopagebreak \medskip
\noindent\texttt{Boundary-Test\_signat:}
Let $m_{\ell}:=a+\frac{w(I)}{2N_{I}}$ and $m_{r}:=b-\frac{w(I)}{2N_{I}}$, and let $\epsilon := 2^{-\ceil{2 + \log n}}$. Choose arbitrary points (see Footnote~\ref{footnote:apx}) 
\begin{equation}\label{Boundarymultipoint}
m_{\ell}^{*}\in  \multipoint{m_{\ell}}{\epsilon \cdot\frac{w(I)}{N_I}} \quad\text{and}\quad m_{r}^{*}\in \multipoint{m_{r}}{\epsilon\cdot\frac{w(I)}{N_I}}, 
\end{equation}
and compute the sign of $p(x)$ at $x=a$, $x=m_{\ell}^{*}$, $x=m_{r}^{*}$, and $x=b$.
If $p(a)\cdot p(m_{\ell}^{*})<0$, then $I':=(a,m_{\ell})$ isolates $\xi$, and thus, we return $I'$. If $p(b)\cdot p(m_{r}^{*})<0$, we return $I'=(m_{r},b)$. Notice that from our definition of $m_{\ell}^{*}$ and $m_{r}^{*}$, it follows that both intervals $I_{\ell}$ and $I_{r}$ have width in between $\frac{w(I)}{4N_{I}}$ and $\frac{w(I)}{N_{I}}$. If $p(a)\cdot p(m_{\ell}^{*})<0$ or $p(b)\cdot p(m_{r}^{*})<0$, the \texttt{Newton-Test\_signat} succeeds. Otherwise, the test fails. \nopagebreak
\medskip
\hrule\medskip

If the \texttt{Boundary-Test\_signat} succeeds, then $\mathcal{A}=(I,N_I)$ is replaced by $\mathcal{A}:=(I',N_{I'})$, with $N':=N_I^2$.
If the \texttt{Newton-Test\_signat} as well as the \texttt{Boundary-Test\_signat} fail, then we choose an arbitrary point (see Footnote~\ref{footnote:apx})
\begin{align}\label{Bisectionmultipoint}
m^* \in \multipoint{m(I)}{\frac{w(I)}{2^{\ceil{2 + \log n}}}}
\end{align}
and compute the sign of $p(x)$ at $x=a$ and $x=m^*$. If $p(a)\cdot p(m^*)<0$, we replace $\mathcal{A}=(I,N_I)$ by $\mathcal{A}:=(I',N_{I'})$, with $I'=(a,m^*)$ and $N_{I'}:=\max(4,\sqrt{N_{I}})$. If $p(m^*)=0$, we stop and return the interval $[m^*]$ of width zero. Otherwise, we replace $\mathcal{A}=(I,N_I)$ by $\mathcal{A}:=(I',N_{I'})$, with $I'=(m^*,b)$ and $N_{I'}:=\max(4,\sqrt{N_{I}})$. We stop refining $I$ as soon as $I$ has width less than~$2^{-L}$.\bigskip

We formulated the \texttt{Newton-Test\_signat} and the \texttt{Boundary-Test\_signat} in a way such that each of them 
succeeds if the corresponding test in~\cite{DBLP:journals/corr/SagraloffM13} succeeds, assuming that we choose the same 
points in~(\ref{Newtonmultipoint1}),~(\ref{Newtonmultipoint2}), and in~(\ref{Boundarymultipoint}). Namely, if $I'=(a',b')$ is 
known to contain at most one (simple) root of $p$, then $\var(p,I')=0$ implies that $p(a')\cdot p(b')\ge 0$.\footnote{\small $p(a')\cdot p(b')\ge 0$ implies that $I'$ contains no root but not that $\var(p,I')=0$.} Hence, 
the analysis from~\cite{DBLP:journals/corr/SagraloffM13} directly carries over and yields the following result:\footnote{\small The proof of Lemma~\ref{bound1:connection} is essentially identical to our considerations 
in~\cite[Section 3.2 and 4.1]{DBLP:journals/corr/SagraloffM13}. In particular, the proofs 
of~\cite[Lemma 20 and 23]{DBLP:journals/corr/SagraloffM13} directly carry over if we use that $\log N_I$ is always bounded by $O(\tau+L)$ when refining $I_0$ to a width less 
than~$2^{-L}$.}

\begin{lemma}\label{bound1:connection}
Let $I_0,I_1,\ldots,I_s$, with $I_0\supset I_1\supset\cdots\supset I_s$, $s\in\N$, and $w(I_{s-1})\ge 2^{-L}>w(I_s)$, be the intervals produced by the algorithm \textsc{NewRefine}, and let $s_{\max}$ be the largest number of intervals $I_j$ for which the one-circle region of $I_j$ contains exactly the same roots. Then, it holds that
$
s_{\max}=O(\log n+\log(\tau+L)).
$
\end{lemma}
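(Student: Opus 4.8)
The plan is to reduce the statement to the quantitative analysis of the original Newton- and Boundary-Test carried out in \cite[Section 3.2 and 4.1]{DBLP:journals/corr/SagraloffM13}, the only new ingredient being a bound on $\log N_I$ that holds throughout the run of \textsc{NewRefine}. Two elementary facts make the reduction legitimate. First, every interval produced by \textsc{NewRefine} still contains the simple root $\xi$: each iteration either returns the zero-width interval $[m^*]$ located at $\xi$, or passes to a subinterval across whose endpoints $p$ takes opposite signs; in particular the one-circle region of each $I_j$ contains $\xi$, hence exactly one root of $p$ for all $j$ from some index on. Second, as already noted in the paragraph preceding the lemma, whenever a candidate subinterval $I'=(a',b')\subset I_0$ is tested it contains at most one, necessarily simple, root, so $\var(p,I')=0$ forces $p(a')p(b')\ge 0$; consequently the sign test ``$p(a')p(b')<0$'' used by each \texttt{\_signat} variant is passed whenever the Descartes test of the corresponding test in \cite{DBLP:journals/corr/SagraloffM13} is passed for the same sample points in (\ref{Newtonmultipoint1}), (\ref{Newtonmultipoint2}) and (\ref{Boundarymultipoint}). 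Hence the success pattern of the tests -- and with it the sequence of intervals $I_j$ and of integers $N_{I_j}$ -- is dominated, step by step, by the one analyzed in \cite{DBLP:journals/corr/SagraloffM13}, and their Lemmas~20 and~23 become applicable.

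Next I would record the dynamics of $N_I$. The update rules keep $N_I$ of the form $4^{2^t}$ with $t\in\Z_{\ge 0}$: a successful Newton- or Boundary-Test replaces $t$ by $t+1$, and a bisection replaces $t$ by $\max(0,t-1)$. Moreover $\log N_{I_j}=O(\tau+L)$ for every interval produced while the width is still at least $2^{-L}$ -- which covers all $N_{I_j}$ the above analysis refers to. Indeed, to reach $N_{I_j}=4^{2^t}$ there must be an earlier step $i$ at which $N_{I_i}=4^{2^{t-1}}$ and a test succeeded, hence $w(I_{i+1})\le w(I_i)/N_{I_i}$; together with $w(I_i)\le w(I_0)\le 2^{\tau+1}$ and $w(I_{i+1})\ge 2^{-L}$ this gives $2^{2^t}=4^{2^{t-1}}\le 2^{\tau+1+L}$, i.e. $\log N_{I_j}=2\cdot 2^t\le 2(\tau+1+L)=O(\tau+L)$, and thus $\log\log N_{I_j}=O(\log(\tau+L))$.

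The main step is then the amortized argument of \cite[Lemmas~20 and~23]{DBLP:journals/corr/SagraloffM13}, read off for a maximal run $I_{j_0}\supset I_{j_0+1}\supset\cdots\supset I_{j_1}$ of intervals whose one-circle regions contain exactly the same roots of $p$. Inside such a run that analysis shows that the Newton-Test succeeds -- giving a quadratic step $t\mapsto t+1$ together with $w(I')\le w(I)/N_I$ -- unless $N_I$ has already grown to the order set by the local geometry of the root cluster, in which case the Boundary-Test, or at worst a single bisection, still makes definite progress; the interplay of the doubly-exponential growth of the shrink factor $N_I$ on the good steps with the at most $O(\log\log N_{\max})$ corrective steps bounds the length of the run by $O(\log n+\log\log N_{\max})$, the additive $\log n$ being the degree term already present in \cite{DBLP:journals/corr/SagraloffM13} (the relevant Obreshkoff regions contain $O(n)$ roots and the sample-point perturbations in (\ref{Newtonmultipoint1})--(\ref{Bisectionmultipoint}) are of relative size $\Theta(1/n)$). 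Substituting $\log\log N_{\max}=O(\log(\tau+L))$ from the preceding paragraph yields $s_{\max}=O(\log n+\log(\tau+L))$.

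The part I expect to need the most care -- the rest being essentially the argument of \cite{DBLP:journals/corr/SagraloffM13} -- is checking that the approximations built into the \texttt{\_signat} tests do not weaken their success guarantee relative to the idealized tests (exact arithmetic, $m_i=m$) analyzed there: the multipoint sets $\multipoint{m}{\delta}$, the precision-doubling loop producing $\tilde\lambda_{i,j}$ with $|\tilde\lambda_{i,j}-\lambda_{i,j}|\le 1/(32N_I)$, and the relaxed acceptance conditions (\ref{condition1})--(\ref{condition2}). As in \cite{DBLP:journals/corr/SagraloffM13} one verifies that each such perturbation is small compared with $w(I)/N_I$, so that on a good step the conditions (\ref{condition1})--(\ref{condition2}) are certified after finitely many doublings of the working precision -- and here the bound $\log N_I=O(\tau+L)$ is precisely what limits how far that loop can run. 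Once this is in place the proofs of \cite[Lemmas~20 and~23]{DBLP:journals/corr/SagraloffM13} carry over without further change.
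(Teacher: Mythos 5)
Your proposal matches the paper's treatment: the paper also proves this lemma by citing \cite[Lemmas~20 and~23]{DBLP:journals/corr/SagraloffM13}, noting that the \texttt{\_signat} tests succeed whenever the Descartes-based tests do (since $\var(p,I')=0$ forces $p(a')p(b')\ge 0$ inside an isolating interval) and that $\log N_I=O(\tau+L)$ throughout the refinement. You supply a clean derivation of the $\log N_I$ bound from the doubly-exponential $N$-dynamics that the paper merely asserts in a footnote, but the route is the same.
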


From the lemma above and Theorem~\ref{propertiesvar}, we now obtain the following bound for the number of iterations that is needed to refine $I_0$ to an interval of width less than $2^{-L}$. 

\begin{theorem}\label{thm:arithmeticcomplexity}
Let $I_0=(a_0,b_0)\subset (0,2^{\tau+1})$, with $a_0,b_0\in\Q$, be an isolating interval for a simple root $\xi$ of a $k$-nomial $p\in\Z[x]$ of magnitude $(n,\tau)$. For computing an interval $I=(a,b)\subset I_0$, with $a,b\in\Q$ and $\xi\in I$, the algorithm \textsc{NewRefine} needs
$
O(\var(p,I_0)\cdot (\log n+\log(\tau+L)))
$
many iterations and $O(k\cdot\log n\cdot \var(p,I_0)\cdot (\log n+\log(\tau+L))$ many arithmetic operations over $\Q$.
\end{theorem}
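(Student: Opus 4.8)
The plan is to combine the structural bound on the number of iterations coming from Lemma~\ref{bound1:connection} with the ``amortized quadratic convergence'' analysis that is already implicit in~\cite{DBLP:journals/corr/SagraloffM13}. First I would argue that the iterations of \textsc{NewRefine} split into two groups: those that start from an interval $I$ whose one-circle region contains a root other than $\xi$ (call these the \emph{cluster} iterations), and those whose one-circle region contains only $\xi$ (the \emph{clean} iterations). For a clean interval, part~(b) of Theorem~\ref{propertiesvar} guarantees $\var(p,I)\le 1$, and since $I$ is isolating we in fact have $\var(p,I)=1$; in this regime the \texttt{Newton-Test\_signat} behaves like ordinary Newton iteration and succeeds, so $N_I$ squares each step and the width drops doubly-exponentially — hence only $O(\log(\tau+L))$ further clean iterations suffice once we are in this regime, since $w(I_0)\ge 2^{-(\tau+1)}$ and the target width is $2^{-L}$.

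Next I would bound the cluster iterations. The point of Lemma~\ref{bound1:connection} is that no fixed ``cluster configuration'' — i.e.\ no fixed set of roots captured by the one-circle region — can persist for more than $s_{\max}=O(\log n+\log(\tau+L))$ consecutive intervals before either the \texttt{Newton-Test\_signat} or the \texttt{Boundary-Test\_signat} separates $\xi$ off and forces a quadratic contraction that changes the configuration. Since the one-circle regions are nested and each root can enter at most once and leave at most once as we shrink, there are only $O(\var(p,I_0))$ distinct configurations encountered along the chain $I_0\supset I_1\supset\cdots\supset I_s$ — this is where the factor $\var(p,I_0)\le k-1$ enters, via part~(c) of Theorem~\ref{propertiesvar} applied on the positive axis. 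Multiplying the number of configurations by $s_{\max}$ gives the claimed iteration bound $O(\var(p,I_0)\cdot(\log n+\log(\tau+L)))$, and the clean phase analyzed above is absorbed into this (it is the special configuration ``only $\xi$'').

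For the arithmetic-operation count I would simply tally the work per iteration. Each of the three tests evaluates $p$ and $p'$ at a constant number of points $\xi_j^*$, $a_{i,j}^*$, $b_{i,j}^*$, $m_\ell^*$, $m_r^*$, $m^*$; since $p$ is a $k$-nomial and each power $x^i$ costs at most $2\log i\le 2\log n$ multiplications by repeated squaring, a single evaluation of $p$ (or of $p'$, which has at most $k$ terms as well) costs $O(k\log n)$ operations over $\Q$, and forming the quantities $v_j$, $\tilde\lambda_{i,j}$, $\ell_{i,j}$, the comparisons in~(\ref{condition1})–(\ref{condition2}), and the new endpoints costs only $O(1)$ more. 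Hence each iteration costs $O(k\log n)$ arithmetic operations, and multiplying by the iteration bound yields $O(k\cdot\log n\cdot\var(p,I_0)\cdot(\log n+\log(\tau+L)))$, as claimed.

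The main obstacle I expect is not the counting but justifying cleanly that ``a configuration persists for at most $s_{\max}$ steps'' — i.e.\ re-deriving (or carefully citing) the two key lemmas of~\cite{DBLP:journals/corr/SagraloffM13} in the present setting, where the Descartes test on $I'$ has been replaced by the sign test $p(a')\cdot p(b')<0$. The crucial observation making this transfer valid is the one already noted before the lemma: for an interval $I'$ that contains at most one simple root, $\var(p,I')=0 \Longrightarrow p(a')p(b')\ge 0$, so the sign-test version of \texttt{Newton-Test\_signat} succeeds whenever the original Descartes-based Newton test would have succeeded; thus every success event in the analysis of~\cite{DBLP:journals/corr/SagraloffM13} is still a success event here, and the worst-case iteration bound is preserved. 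One must also check that $\log N_I=O(\tau+L)$ throughout (the $N_I$ grow by squaring at most $O(\log(\tau+L))$ times between resets, and bisection only halves the exponent), so that the ``admissible point'' offsets $\epsilon\cdot w(I)/N_I$ never force more than a constant amount of extra precision — but this is routine once the iteration bound is in hand.
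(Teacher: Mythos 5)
Your arithmetic-operation tally per iteration is correct and matches the paper, and your observation that ``$\var(p,I')=0 \Rightarrow p(a')p(b')\ge 0$'' correctly justifies the transfer of the analysis of~\cite{DBLP:journals/corr/SagraloffM13} to the sign-test variant. But there is a genuine gap in the iteration count, specifically in the step where you claim that ``there are only $O(\var(p,I_0))$ distinct configurations encountered along the chain.'' That would follow if the one-circle region $A_0$ of $I_0$ contained at most $O(\var(p,I_0))$ roots, but Theorem~\ref{propertiesvar}(a) bounds only the number of roots in the Obreshkoff \emph{lens} $L_n(I_0)$ by $\var(p,I_0)$; the one-circle region $A_0(I_0)\supset L_n(I_0)$ can contain up to $n$ roots of $p$ (e.g.\ complex roots clustered near the midpoint of $I_0$, lying inside the disk but outside the thin lens). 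So the number of configurations along the whole chain is not controlled by $\var(p,I_0)$, and multiplying $\var(p,I_0)$ by $s_{\max}$ does not bound $s$.

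The paper closes this by introducing the index $j_0$ of the first interval whose one-circle region holds at most $v_0:=\var(p,I_0)$ roots and then bounding $j_0=O(\log n+\log(\tau+L))$ \emph{separately}: while $I_j$ shares an endpoint with $I_0$ the \texttt{Boundary-Test\_signat} forces quadratic progress except for $O(\log n + \log(\tau+L))$ iterations (cf.\ \cite[Lemma 23]{DBLP:journals/corr/SagraloffM13}); once $I_j$ no longer touches $\partial I_0$, after $O(\log n)$ further halvings the distance from $I_j$ to $a_0,b_0$ exceeds $8n^2\cdot w(I_j)$, at which point \cite[Lemma 9]{Sagraloff12} places $A_0(I_j)$ inside $L_n(I_0)$, hence at most $v_0$ roots. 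Only \emph{then} does the nested-configuration counting you describe yield $s-j_0=O(v_0\cdot s_{\max})$. Your proof needs this intermediate phase argument; without it the iteration bound could degrade to $O(n\cdot(\log n+\log(\tau+L)))$. A smaller quibble: even on ``clean'' intervals the \texttt{Newton-Test\_signat} need not succeed (e.g.\ if $\xi$ hugs an endpoint), so your ``in this regime the Newton test succeeds'' is not literally true — the correct bound there also comes from Lemma~\ref{bound1:connection}, giving $O(\log n+\log(\tau+L))$, not $O(\log(\tau+L))$ (which is anyway absorbed in the final bound).
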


\begin{proof}
As in Lemma~\ref{bound1:connection}, we denote $I_0,I_1,\ldots,I_s$, with $I_0\supset I_1\supset\cdots\supset I_s$, $s\in\N$, and $w(I_{s-1})\ge 2^{-L}>w(I_s)$, the intervals produced by \textsc{NewRefine}. 
Let $j_0$ be the minimal index $j$ for which the one-circle region $A_0$ of $I_j$ contains at most $v_0$ many roots, with $v_0:=\var(p,I_0)$. If the one-circle region of each $I_j$ contains more than $v_0$ roots, we set $j_0:=s$.
Now, using Lemma~\ref{bound1:connection} for the sub-sequence $I_{j_0},\ldots,I_{s}$, we conclude that $s-j_0=v_0\cdot s_{\max}$. 
Hence, we are left to argue that $j_0$ is bounded by $O(v_0\cdot (\log n+\log(\tau+L)))$. In fact, the following consideration even shows that $j_0=O(\log n+\log(\tau+L))$: We first consider the special case, where $I_{j_0}$ shares a common endpoint with $I_0$. Then, exactly the same argument as in the proof of~\cite[Lemma 23]{DBLP:journals/corr/SagraloffM13} shows that $j_0$ is bounded by $O(\log n+\log(\tau+L))$, where we use that $N_{I_j}=O(L+\tau)$ for all $j$. Essentially, this is due to the fact that success of the \texttt{Boundary-Test\_signat} guarantees quadratic convergence, and the latter test must succeed for all but $O(\log n+\log(\tau+L))$ many iterations. Now suppose that there exists an index $j_0'<j_0$ such that $I_{j_0'}$ shares a common endpoint with $I_0$, whereas $I_{j_0'+1}$ does not. Then, $j_0'=O(\log n+\log(\tau+L))$. In addition, the distance from any point $x\in I_{j_0'+1}$ to each of the two endpoints $a_0$ and $b_0$ is larger than or equal to $w(I_{j_0'+1})/4$. Hence, since $w(I_{j+1})\le \frac{3}{4}\cdot w(I_{j})$ for all $j$, we have $\max(|a_j-a_0|,|b_j-b_0|)>8n^2\cdot w(I_j)$ for all $j>j_0'+4(\log n+1)$. According to~\cite[Lemma 9]{Sagraloff12}, it follows that the one-circle region of any interval $I_j$, with $j>j_0'+4(\log n+1)$, is contained in the Obreshkoff lens $L_n$ of $I_{0}$. Now, from part (a) of Lemma~\ref{propertiesvar}, we conclude that the one-circle region of $I_j$ contains at most $v_0$ roots for each $j>j_0'+4(\log n+1)$, and thus, $j_0=O(\log n+\log(\tau+L))$. This proves the first claim.

For the second claim, we remark that, in each iteration, we perform a constant number of evaluations of the polynomial $p$ and its derivative $p'$. Since both polynomials have $k$ coefficients or less, this shows that we need $O(k\log n)$ arithmetic operations over $\Q$ in each iteration. Multiplication of the latter bound with the bound on the number of iterations eventually yields the claimed bound on the arithmetic complexity.
\end{proof}

Now suppose that isolating intervals $I_1,\ldots,I_{k_0}\subset \mathcal{I}$ for all simple real roots of $p$ are given. Then, $\sum_{j=1}^{k_0}\var(p,I_j)\le \var(p,\mathcal{I})\le k$, and thus, Theorem~\ref{thm:arithmeticcomplexity} yields the following result: 
\begin{corollary}\label{cor:arithmeticcomplexity}
Let $p\in\Z[x]$ be a $k$-nomial of magnitude $(n,\tau)$, and $I_j=(a_j,b_j)\subset \mathcal{I}=(0,2^{\tau+1})$, with $j=1,\ldots,k_0$ and $a_j,b_j\in\Q$, be isolating intervals for all simple real roots of $p$. Then, we can refine all intervals $I_j$ to a width less than $2^{-L}$, with $L$ an arbitrary positive integer, with a number of arithmetic operations over $\Q$ bounded by 
$
O(k^2\cdot \log n\cdot (\log n+ \log(\tau+L))).
$
\end{corollary}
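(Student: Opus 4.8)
The plan is to reduce to the single-interval case already handled by Theorem~\ref{thm:arithmeticcomplexity} and then sum over all $k_0$ isolating intervals, using the superadditivity of $\var(p,\cdot)$ to keep the combined variation count under control.

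Concretely, I would first note that, since $I_1,\dots,I_{k_0}$ isolate pairwise distinct simple real roots of $p$ and are all contained in $\mathcal I=(0,2^{\tau+1})$, we may assume them to be pairwise disjoint (if necessary, shrink each interval around its root, or split $\mathcal I$ at midpoints of consecutive roots; this does not affect the isolating property). Applying part (c) of Theorem~\ref{propertiesvar} repeatedly then gives
\[
\sum_{j=1}^{k_0}\var(p,I_j)\ \le\ \var(p,\mathcal I)\ \le\ \var(p)\ \le\ k-1 ,
\]
where the last inequality is Descartes' Rule of Signs together with the fact that $p$ has at most $k$ nonzero coefficients.

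Next, for each fixed $j$, Theorem~\ref{thm:arithmeticcomplexity} guarantees that \textsc{NewRefine}, applied to $I_j$ with the prescribed value of $L$, returns an isolating interval for the corresponding root of width less than $2^{-L}$ using $O\bigl(k\cdot\log n\cdot\var(p,I_j)\cdot(\log n+\log(\tau+L))\bigr)$ arithmetic operations over $\Q$. Summing over $j=1,\dots,k_0$ and factoring out the $j$-independent quantity $k\cdot\log n\cdot(\log n+\log(\tau+L))$, the total number of operations is
\[
O\!\left(k\cdot\log n\cdot(\log n+\log(\tau+L))\sum_{j=1}^{k_0}\var(p,I_j)\right)
= O\bigl(k^2\cdot\log n\cdot(\log n+\log(\tau+L))\bigr),
\]
using the bound $\sum_j\var(p,I_j)\le k-1$ from the previous step. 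This is precisely the asserted estimate.

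Both ingredients are already in place, so I do not expect a genuine obstacle. The only point needing a little care is the passage $\sum_j\var(p,I_j)\le k$: it relies on the isolating intervals being disjoint so that Theorem~\ref{propertiesvar}(c) can be iterated, and on the per-interval cost in Theorem~\ref{thm:arithmeticcomplexity} scaling with the \emph{local} count $\var(p,I_j)$ rather than with the global bound $k$. It is exactly this ``local versus global'' distinction that yields the exponent $2$ in $k$ instead of the weaker $3$ one would get from bounding each $\var(p,I_j)$ by $k$ separately.
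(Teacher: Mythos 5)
Your proposal is correct and follows essentially the same route as the paper: the paper's own proof is precisely the one-line observation $\sum_{j=1}^{k_0}\var(p,I_j)\le\var(p,\mathcal I)\le k$ combined with the per-interval bound of Theorem~\ref{thm:arithmeticcomplexity}, which is exactly what you spell out (including the disjointness remark needed to iterate Theorem~\ref{propertiesvar}(c)).
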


\section{Bit Complexity}\label{bitcomplexity}

We finally aim to derive a bound on the bit complexity of our algorithm when using approximate but certified arithmetic. When using exact arithmetic over dyadic numbers (i.e.~numbers of the form $m\cdot 2^{-l}$, 
with $m,l\in\Z$), all intermediate results are dyadic and of bit-size $O(n^2(\log n+\tau))$. 
Namely, we refine intervals to a width of size $2^{-O(n(\tau+\log n))}$, and only consider evaluations of the polynomial $p$ 
at dyadic points that are contained in such intervals and whose bit-size is bounded by $\kappa=O(n(\tau+\log n))$. From the latter fact and Theorem~\ref{thm:main1}, we conclude that the overall bit complexity of our algorithm 
is bounded by $\tilde{O}(n^2\tau\cdot k^3)$ when using exact arithmetic over rational (dyadic) numbers. Here, we use that exact evaluation of $p$ at a dyadic number of bit-size $\kappa$ needs $\tilde{O}(n(\kappa+\tau))$ bit operations~\cite[Lemma~2]{DBLP:conf/issac/BouzidiLPR13}.
However, the following considerations show that we can replace a factor $n$ by an additional factor $k$ in the latter bound. More precisely, using approximate computation, we can reduce the bit-size of the intermediate results by a factor $n$ for the price of using $k$ times as many arithmetic operations. We give details:\\

Notice that, at several places in the algorithm $\textsc{NewRefine}$, that is, in (\ref{Newtonmultipoint1}),~(\ref{Newtonmultipoint2}),~(\ref{Boundarymultipoint}), and in (\ref{Bisectionmultipoint}), we are free to choose an arbitrary point $m_i$ from a set $$\multipoint{m}{\delta}:=\set{m_i:=m+(i-\lceil k/2\rceil)\cdot \delta}{i=0,\ldots,2\cdot \lceil k/2\rceil}$$ 
consisting of $\lceil k/2\rceil +1$ points that are clustered at $m$. Now, in order to keep the precision of the computations as low as possible, we aim to choose a point $m_i\in\multipoint{m}{\delta}$ for which $p(m_i)$ has a large absolute value. We introduce the following definition that has already been used in~\cite[Section 2.2]{DBLP:journals/corr/SagraloffM13} in a slightly modified form.

\begin{definition}\label{admissible point} For $\multipoint{m}{\delta}$ as above, we call a point $m^*\in \multipoint{m}{\delta}$ \emph{admissible with respect to $\multipoint{m}{\delta}$} (or just \emph{admissible} if there is no ambiguity) if $|p(m^*)|\ge \frac{1}{4}\cdot\max_{i}|p(m_{i})|$.
\end{definition}

\begin{lemma}\label{lem:apxmultipointeval}
Suppose that each point in $\multipoint{m}{\delta}$ has absolute value less than $2^{\tau+1}$ and that $\lambda:=\max_{i}|p(x_{i})|\neq 0$. Then, we can determine an admissible point $m^*\in \multipoint{m}{\delta}$ and an integer $t$ with 
\[
2^{t-1}\le |p(m^{*})|\le \lambda \le 2^{t+1} 
\]
with $\tilde{O}(k(n\tau+\log \max(\lambda^{-1},1)))$ many bit operations. 
\end{lemma}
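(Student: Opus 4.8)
The plan is to locate an admissible point by computing increasingly accurate approximations of $|p(m_i)|$ for all $\lceil k/2\rceil+1$ candidate points simultaneously, using a doubling strategy on the working precision, and to stop as soon as the approximations are sharp enough to certify that one particular point dominates a constant fraction of the maximum. First I would set up the approximate evaluation: given a target precision of $L$ bits after the binary point, evaluating $p$ at a dyadic point $m_i$ of bit-size $O(n\tau+L)$ (note each $m_i$ has absolute value less than $2^{\tau+1}$, and the shifts $(i-\lceil k/2\rceil)\delta$ only add $O(\log k)$ to the bit-size of the relevant $\delta$ we use in \textsc{NewRefine}) costs $\tilde O(n(n\tau+L))$ bit operations by the evaluation bound cited in the excerpt; doing this for all $k/2+1$ points costs $\tilde O(kn(n\tau+L))$. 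Wait — that already carries an extra factor $n$ inside, so more care is needed: the claimed bound is $\tilde O(k(n\tau+\log\max(\lambda^{-1},1)))$, with no stray factor of $n$ multiplying $n\tau$. So the right reading is that a \emph{single} approximate evaluation to absolute error $2^{-L}$ costs $\tilde O(n\tau+L)$ bit operations — using that $p$ has only $k$ terms, each power $x^{i_l}$ is computed by repeated squaring with $\tilde O(1)$ multiplications of numbers of bit-size $O(\deg\cdot(\tau+L))$, and the $\tilde O(\cdot)$ absorbs the $\log n$, $\log k$, $\log\deg$ factors — and then $k$ evaluations cost $\tilde O(k(n\tau+L))$.

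The core of the argument is the doubling loop. For $L=1,2,4,8,\ldots$ I would compute, for every $i$, an approximation $\widetilde{p}_i$ of $p(m_i)$ with $|\widetilde p_i - p(m_i)|\le 2^{-L}$, then let $i^\ast$ be an index maximizing $|\widetilde p_{i^\ast}|$ and test whether
\begin{align*}
|\widetilde p_{i^\ast}| - 2^{-L} \;\ge\; \tfrac{1}{4}\bigl(\max_i |\widetilde p_i| + 2^{-L}\bigr).
\end{align*}
If this certified inequality holds, then $|p(m_{i^\ast})|\ge |\widetilde p_{i^\ast}|-2^{-L}\ge \frac14\max_i(|\widetilde p_i|+2^{-L})\ge \frac14\max_i|p(m_i)| = \frac14\lambda$, so $m^\ast:=m_{i^\ast}$ is admissible; I then output $m^\ast$ together with $t:=\lceil\log_2 |\widetilde p_{i^\ast}|\rceil+1$ or a nearby integer, adjusted so that $2^{t-1}\le|p(m^\ast)|\le\lambda\le 2^{t+1}$ (since $|p(m^\ast)|\ge\lambda/4$, a single additive correction of the naive $\lfloor\log_2|\widetilde p_{i^\ast}|\rfloor$ suffices). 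The termination argument is the crux: because $\max_i|p(m_i)|=\lambda\neq 0$, as soon as $2^{-L}\le \lambda/100$ the interval estimates $[\,|\widetilde p_i|-2^{-L},\,|\widetilde p_i|+2^{-L}\,]$ are tight enough that the point attaining $\lambda$ visibly dominates a quarter of the maximum, so the loop halts at the first $L$ with $L=O(\log\max(\lambda^{-1},1) + \tau + \log n)$ — here the $\tau$ term is needed because $|p(m_i)|$ could be as large as $2^{O(n\tau)}$... no, $\lambda\le\|p\|_1\cdot 2^{n(\tau+1)} = 2^{O(n\tau)}$, but \emph{small} $\lambda$ is the only thing that forces $L$ up, so the stopping $L$ is $O(\log\max(\lambda^{-1},1))+O(n\tau)$ in the worst case, which is consistent with the stated bound once multiplied by the per-evaluation cost.

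Summing the geometric series of evaluation costs over $L=1,2,4,\ldots$ up to the final $L^\ast=\tilde O(n\tau+\log\max(\lambda^{-1},1))$, the total is dominated by the last round, giving $\tilde O(k\cdot(n\tau+\log\max(\lambda^{-1},1)))$ bit operations for the $k$ evaluations; the overhead of finding the running maximum and performing the certified comparison in each round is $\tilde O(k)$ arithmetic operations on numbers of bit-size $\tilde O(L^\ast)$, which is absorbed. The main obstacle I anticipate is bookkeeping the bit-sizes so that no spurious factor of $n$ sneaks in: one must use that $p$ is a $k$-nomial so that a single evaluation to $L$ bits after the point is $\tilde O(n\tau+L)$ rather than $\tilde O(n(n\tau+L))$, and one must handle the case where $\lambda$ is tiny by bounding the number of doubling rounds in terms of $\log\lambda^{-1}$. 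The rest — the correctness of the admissibility certificate and the determination of $t$ — is a routine interval-arithmetic check.
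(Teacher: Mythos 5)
Your overall strategy --- double the working precision, approximate all $\lceil k/2\rceil+1$ values $|p(m_i)|$ in parallel to absolute error $2^{-K}$, halt once the interval estimates are tight enough, and read off both the admissible point and the exponent $t$ from the certified bounds --- is exactly the paper's, and so is the cost accounting by a geometric sum over the doubling rounds.

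There is, however, a genuine gap in the step where you extract the integer $t$. Your halting test, $|\widetilde p_{i^\ast}|-2^{-L}\ge\frac{1}{4}\bigl(\max_i|\widetilde p_i|+2^{-L}\bigr)$ with $i^\ast$ the argmax of $|\widetilde p_i|$ (equivalently $|\widetilde p_{i^\ast}|\ge\frac{5}{3}2^{-L}$), certifies admissibility $|p(m^\ast)|\ge\lambda/4$, but it only pins the pair $\bigl(|p(m^\ast)|,\lambda\bigr)$ down to within a multiplicative factor of $4$, and a factor-$4$ interval need not fit inside \emph{any} dyadic window $[2^{t-1},2^{t+1}]$. Concretely, if $|p(m^\ast)|=1.1$ and $\lambda=4.3$, there is no integer $t$ with $2^{t-1}\le 1.1$ and $4.3\le 2^{t+1}$, so the parenthetical claim that ``a single additive correction suffices'' is false in general. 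The paper sidesteps this by making the halting criterion enforce a factor strictly below $2$: it keeps doubling $K$ until, for at least one $i$, the uncertainty interval $[\,|\widetilde p_i|-2^{-K},\,|\widetilde p_i|+2^{-K}\,]$ already lies inside some $[2^{t_i-1},2^{t_i+1}]$ with $t_i\in\Z$, which holds exactly when $|\widetilde p_i|>3\cdot2^{-K}$ (the interval's ratio is then $<2$, so a power of two can always be slipped in); it then returns the index $i_0$ with maximal certified $t_{i_0}$, for which $2^{t_{i_0}-1}\le|p(m_{i_0})|\le\lambda\le 2^{t_{i_0}+1}$ and admissibility ($2^{t_{i_0}-1}\ge\lambda/4$) hold simultaneously. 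Your write-up needs precisely this tightening of the stopping test; the doubling loop, the $K=O(\log\max(\lambda^{-1},1))$ termination bound, and the geometric-sum cost analysis then go through exactly as you have them.
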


\begin{proof}
Using the same approach as in~\cite[Section 4]{qir-kerber-11} plus repeated squaring, we can evaluate $p$ at any of the $\lceil k/2\rceil +1$ many points $x=m_i$ to an absolute error less than $2^{-K}$, with $K$ an arbitrary positive integer, in a number of bit operations bounded by $\tilde{O}(k\cdot (n\tau+K))$. 
We can now compute an admissible point $m^*\in\multipoint{m}{\delta}$ as follows: 
Consider $K=1,2,4,8,\ldots$ and approximate \emph{all values} $|p(m_{i})|$ to a precision of $K$ bits after the binary point until, for at least one $i$, we obtain an approximation $2^{t_{i}}$ with $t_{i}\in\Z$ and $2^{t_{i}-1}\le |p(x_{i})|\le 2^{t_{i}+1}$. Now, let $i_{0}$ be such that $t_{i_{0}}$ is maximal; then, it follows that $2^{t_{i_{0}}-1}\le \lambda\le 2^{t_{i_{0}}+1}$. Following this approach, we must stop for a $K$ with $K<2\log \max(\lambda^{-1},1)$. Since we double $K$ at most $\log\log \max(\lambda^{-1},1)$ many times, the claim follows.
\end{proof}

Now, in~(\ref{Newtonmultipoint1}),~(\ref{Newtonmultipoint2}),~(\ref{Boundarymultipoint}), and in (\ref{Bisectionmultipoint}) of \textsc{NewRefine}, we do not choose an arbitrary point from the corresponding set $\multipoint{m}{\delta}$ but an admissible point $m^*\in \multipoint{m}{\delta}$. We argue that, for each such $m^*$, we have $|p(m^*)|>2^{-O(n(\tau+\log n))}$: Let $I=(a,b)$ be the interval that is processed in the current iteration, then
$\min(|m^*-a|,|m^*-b|)>n\delta\ge\left(\sin\frac{\pi}{2n+2}\right)^{-1}\cdot\frac{\delta}{2}.$
Hence, the distance from $m^*$ to the boundary of the Obreshkoff lens $L_n$ of $I$ is larger than $\delta/2$ since the distance from an arbitrary point $x\in I=(a,b)$ to the boundary of $L_n$ is larger than $\min(|x-a|,|x-b|)\cdot \sin\frac{\pi}{2n+2}$; see~\cite[Lemma 5 and Figure 4.1]{Sagraloff12}. Since the Obreshkoff lens $L_n$ of the larger interval $I_0$ contains at most $k$ roots (counted with multiplicity), it follows that there exists at least one point $m_{i_0}\in \multipoint{m}{\delta}$ with $|\xi_j-m_{i_0}|\ge \delta/2$ for all (distinct) complex roots $\xi_j$ of $p$. Let $\xi_{j_0}$ be the root of $p$ that minimizes the distance to $m_{i_0}$. If $\xi_{j_0}=\xi$, then 
\[
\frac{|p(m_{i_0})|}{|p'(\xi)|}=|m_{i_0}-\xi|\cdot\prod_{i\neq j_0} \left(\frac{|m_{i_0}-\xi_j|}{|\xi-\xi_j|}\right)^{\mu_j}\ge |m_{i_0}-\xi|\cdot 2^{-n+1}\ge \frac{\delta}{2^{n}},
\]
where $\mu_j$ denotes the multiplicity of $\xi_j$ as a root of $p$. Hence, from $\delta\ge 2^{-\lceil 5+\log n\rceil}\cdot\frac{w(I)}{N_I}=2^{-O(\log n+\tau+L)}$, we conclude that $|p(m_{i_0})|=2^{-O(n(\log n+\tau))}$ if $w(I)\ge 2^{-L}$, with $L:=128n\cdot(\log n+\tau)$. We are left to discuss the case $\xi_{j_0}\neq \xi$. Then,
\begin{align*}
\frac{|p(m_{i_0})|}{|p^{(\mu_{j_0})}(\xi_{j_0})|}&=|m_{i_0}-\xi_{j_0}|^{\mu_{j_0}}\cdot\prod_{i\neq j_0} \left(\frac{|m_{i_0}-\xi_j|}{|\xi_{j_0}-\xi_j|}\right)^{m_j}\ge \frac{|m_{i_0}-\xi_{j_0}|^{\mu_{j_0}}}{2^{n-1}}\\
&\ge 2^{-2n}\cdot \delta^{\mu_{j_0}}\ge 2^{-2n-n(6+\log n)}\cdot \left(\frac{w(I)}{N_I}\right)^{\mu_{j_0}}.
\end{align*}
Trivially, we have $w(I)\le 2\cdot w(I)/\sqrt{N_I}$ for $N_I=4$. If $N_I>4$, then there must have been an iteration, where we replaced an isolating interval $J$ for $\xi$, with $I\subset J\subset (0,2^{\tau+1})$ by an interval $J'$, with $I\subset J'\subset J$ and $w(J')\le w(J)/\sqrt{N_I}$. Hence, in any case, we have $w(I)\le 2^{\tau+2}/\sqrt{N_I}$. This shows that 
\begin{align*}
\left(\frac{w(I)}{N_I}\right)^{\mu_{j_0}}&\ge w(I)^{3\mu_{j_0}}\cdot 2^{-2\mu_{j_0}\cdot(\tau+2)}\ge |\xi-\xi_{j_0}|^{3\mu_{j_0}}\cdot 2^{-2n(\tau+2)},
\end{align*}
where the second to last inequality follows from the inequality $|\xi-\xi_{j_0}|\le |\xi-m_{i_0}|+|m_{i_0}-\xi_{j_0}|\le w(I)+|m_{i_0}-\xi_{j_0}|$. Then, Theorem~\ref{evalbound} (with $F:=p\cdot 1$) implies that $\left(\frac{w(I)}{N_I}\right)^{\mu_{j_0}}=2^{-O(n(\log n+\tau))}$.
In summary, we conclude that, in~(\ref{Newtonmultipoint1}),~(\ref{Newtonmultipoint2}),~(\ref{Boundarymultipoint}), and in (\ref{Bisectionmultipoint}), we can choose points $m_i\in \multipoint{m}{\delta}$ with $|p(m_i)|=2^{-O(n(\log n+\tau))}$
for the cost of $\tilde{O}(k\cdot n\tau)$ bit operations. Notice that, for the same cost, we can also determine the sign of $p$ at each of these points, and thus, the considered sign evaluations in one iteration need $\tilde{O}(k\cdot n\tau)$ bit operations.

It remains to bound the cost for computing the approximations $\tilde{\lambda}_{j_1,j_2}$ as defined in (\ref{def:lambda}) in \textsc{NewRefine}. Notice that, for checking the inequalities in (\ref{condition1}) and in (\ref{condition2}), it suffices to approximate the values $p(\xi^{*}_{j_{1}})$, $p(\xi^{*}_{j_{2}})$, $p'(\xi^{*}_{j_{1}})$, and 
$p'(\xi^{*}_{j_{2}})$ to an absolute error of $$\log \min(p(\xi^{*}_{j_{1}}),p(\xi^{*}_{j_{2}}))+O(\log (n/w(I)))=O(n(\log n+\tau))$$ bits after the binary point. Again, the cost for computing such approximations is bounded by $\tilde{O}(k\cdot n\tau)$ bit operations. Then, the same complexity bounds also holds for the computation of $\tilde{\lambda}_{j_1,j_2}$. Namely, since $v_{j_1}-v_{j_2}$ has absolute value larger than $w(I)/(8n)$, and $v_{j_1}$ as well as $v_{j_2}$ have absolute value smaller than $2^{O(n\tau)}$, it suffices to carry out all operations with a precision of $O(\log N_I+\log w(I)^{-1}+n\tau)$ bits after the binary point. We summarize:

\begin{lemma}\label{refinement:bitcomplexity}
Let $p\in\Z[x]$ be a $k$-nomial of magnitude $(n,\tau)$, and let $I_j=(a_j,b_j)\subset \mathcal{I}=2^{\tau+1}$, with $j=1,\ldots,k_0$, be isolating intervals for all simple real roots of $p$. Suppose that $a_j,b_j\in\Q$ and $\min_{j}\min(|p(a_j)|,|p(b_j)|)>2^{-L}$, with $L:=128n\cdot(\log n+\tau)$. Then, \textsc{NewRefine} refines all intervals $I_j$ to a width less than $2^{-L}$ with a number of bit operations bounded by 
$
\tilde{O}(k^3\cdot n\tau).$
For each interval $I_j'=(a_j',b_j')$ returned by \textsc{NewRefine}, we have $a_j',b_j'\in\Q$ and $\min(|p(a_j')|,|p(b_j')|>2^{-L}$.
\end{lemma}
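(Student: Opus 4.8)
The plan is to combine the iteration bound from Theorem~\ref{thm:arithmeticcomplexity} (together with Corollary~\ref{cor:arithmeticcomplexity}, which bounds the total number of iterations over all simple roots by $O(k\cdot\log n\cdot(\log n+\log(\tau+L)))$) with the per-iteration bit-complexity bounds established in the discussion preceding the lemma. First I would recall that, for the chosen value $L=128n\cdot(\log n+\tau)$, we have $\log(\tau+L)=O(\log(n\tau))$, so the total number of iterations of \textsc{NewRefine}, summed over all the intervals $I_1,\dots,I_{k_0}$, is $O(k\cdot\log^2(n\tau))=\tilde O(k)$. It therefore suffices to show that each iteration of \textsc{NewRefine} can be carried out with $\tilde O(k^2\cdot n\tau)$ bit operations; multiplying the two bounds gives the claimed $\tilde O(k^3\cdot n\tau)$.

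Next I would account for the cost of a single iteration. By the preceding analysis, every iteration performs only a constant number of the following operations: choosing an admissible point $m^*\in\multipoint{m}{\delta}$ in one of (\ref{Newtonmultipoint1}), (\ref{Newtonmultipoint2}), (\ref{Boundarymultipoint}), (\ref{Bisectionmultipoint}), determining the sign of $p$ at such points, and (in the \texttt{Newton-Test\_signat}) approximating the values $p(\xi_j^*),p'(\xi_j^*)$ and forming $\tilde\lambda_{i,j}$. For the first two, Lemma~\ref{lem:apxmultipointeval} applies with the bound $|p(m_i)|=2^{-O(n(\log n+\tau))}$ that was just derived from Theorem~\ref{evalbound} and the Obreshkoff-lens argument; hence $\log\max(\lambda^{-1},1)=O(n(\log n+\tau))=O(n\tau)$ up to logarithmic factors, and Lemma~\ref{lem:apxmultipointeval} gives a cost of $\tilde O(k\cdot n\tau)$ per such step, which also yields the sign of $p$ at the admissible point for free. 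For the remaining arithmetic in the Newton-Test, the argument already given shows it suffices to carry everything out with precision $O(\log N_I+\log w(I)^{-1}+n\tau)=O(n\tau)$ bits after the binary point (using $\log N_I=O(\tau+L)$ and $w(I)^{-1}\le 2^{O(n(\tau+\log n))}$), so each of the $O(1)$ evaluations of $p$ and $p'$ at a point of bit-size $O(n\tau)$ costs $\tilde O(k\cdot n\tau)$ by the multipoint/repeated-squaring evaluation scheme, and the subsequent divisions and comparisons are cheaper. Summing the $O(1)$ contributions, one iteration costs $\tilde O(k\cdot n\tau)$; wait --- here the extra factor $k$ in the per-iteration count comes from the fact that in the approximate setting each ``evaluation'' is really a multipoint evaluation over the $\Theta(k)$ points of $\multipoint{m}{\delta}$, so the honest per-iteration bound is $\tilde O(k\cdot n\tau)$ with that $k$ already folded in, and multiplying by the $\tilde O(k)$ total iterations of Corollary~\ref{cor:arithmeticcomplexity} would give $\tilde O(k^2 n\tau)$; the third factor of $k$ is recovered once we recall that Corollary~\ref{cor:arithmeticcomplexity} counts \emph{arithmetic} operations over $\Q$, of which there are $O(k\log n)$ per iteration (evaluating $k$-nomials), so the number of \emph{iterations} is $\tilde O(k)$ but the number of polynomial evaluations across all iterations is $\tilde O(k^2)$, each costing $\tilde O(k\cdot n\tau)$ bit operations. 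This gives the total $\tilde O(k^3\cdot n\tau)$.

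For the final sentence of the lemma, I would note that the hypothesis $\min(|p(a_j)|,|p(b_j)|)>2^{-L}$ is preserved: every endpoint produced by \textsc{NewRefine} is either an endpoint inherited from a previous interval, an admissible point $m^*$ chosen in (\ref{Newtonmultipoint1})--(\ref{Bisectionmultipoint}), or (in the bisection branch) an admissible point which, if $p(m^*)=0$, terminates the algorithm and returns an interval of width zero --- but in that degenerate case $\xi=m^*$ and the remaining endpoint is unchanged, so it still satisfies the bound; in all non-degenerate cases the admissibility inequality $|p(m^*)|\ge\frac14\max_i|p(m_i)|$ combined with the just-proved lower bound $\max_i|p(m_i)|=2^{-O(n(\log n+\tau))}>2^{-L}$ (for $L=128n(\log n+\tau)$) gives $|p(m^*)|>2^{-L}$, and the rational endpoints remain rational since the points of $\multipoint{m}{\delta}$ are rational offsets of rational points. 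Thus both conclusions of the lemma follow.

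The main obstacle, and the place requiring the most care, is verifying the uniform lower bound $|p(m_i)|=2^{-O(n(\log n+\tau))}$ for an admissible choice of $m_i$ throughout the \emph{entire} refinement process (not just for $I_0$): this is exactly the Obreshkoff-lens counting argument carried out before Lemma~\ref{refinement:bitcomplexity}, and it relies on the bound $L=128n(\log n+\tau)$ being large enough both to apply Theorem~\ref{evalbound} and to ensure $w(I)\ge 2^{-L}$ throughout, while simultaneously $w(I)\le 2^{\tau+2}/\sqrt{N_I}$ keeps $\log N_I=O(\tau+L)$. Everything else is bookkeeping: translating arithmetic-operation counts into bit-operation counts via the $\tilde O(k(n\tau+K))$ multipoint-evaluation primitive and checking that no intermediate quantity (the $v_i$, the $\lambda_{i,j}$, the interval endpoints) ever has bit-size or reciprocal exceeding $2^{O(n\tau)}$.
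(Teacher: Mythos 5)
The overall skeleton of your argument is right and matches the paper: bound the total number of iterations by $\tilde O(k)$ (via Theorem~\ref{thm:arithmeticcomplexity} and $\sum_j \var(p,I_j)\le k$, using $\log(\tau+L)=O(\log(n\tau))$), bound the per-iteration bit cost, and multiply. Your treatment of the endpoint-preservation claim is also essentially the paper's. The problem is the middle step: your derivation of the third factor of $k$ does not hold together.

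Concretely, you first quote Lemma~\ref{lem:apxmultipointeval} to obtain $\tilde O(k\cdot n\tau)$ per admissible-point selection, which together with $\tilde O(k)$ iterations only gives $\tilde O(k^2\cdot n\tau)$. You then attempt to recover the missing factor by appealing to Corollary~\ref{cor:arithmeticcomplexity} and asserting that ``the number of polynomial evaluations across all iterations is $\tilde O(k^2)$.'' This is a non sequitur: the $\tilde O(k^2)$ in that corollary is a count of \emph{arithmetic operations over $\Q$}, which already factors as ($\tilde O(k)$ iterations) $\times$ ($O(k\log n)$ arithmetic ops per single evaluation of a $k$-nomial); it does not give $\tilde O(k^2)$ distinct evaluations. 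In fact, the corollary is stated for the exact-arithmetic variant where one can pick $m_i=m$ directly, so it says nothing about the $\Theta(k)$-fold multipoint search at all. What actually supplies the third $k$ is precisely the thing you first mention and then incorrectly declare ``already folded in'': to find an admissible point, one must evaluate $p$ at all $\Theta(k)$ candidates in $\multipoint{m}{\delta}$, and each such point-evaluation of the $k$-nomial $p$ at precision $O(n(\log n+\tau))$ costs $\tilde O(k\cdot n\tau)$. Hence one admissible-point computation costs $\tilde O(k^2\cdot n\tau)$ (this is exactly how the paper's proof phrases it — ``we perform approximate computation of $p$ at $O(k)$ many points in parallel''), and with $O(1)$ such computations per iteration and $\tilde O(k)$ iterations one obtains $\tilde O(k^3\cdot n\tau)$. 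Note also that Lemma~\ref{lem:apxmultipointeval} as stated gives $\tilde O(k(n\tau+\log\max(\lambda^{-1},1)))$, which is the per-candidate cost; taking it at face value as the cost of the whole admissible-point search is what put you one factor of $k$ short and triggered the detour. If you replace the Corollary-based argument by the direct ``$\Theta(k)$ candidates, each $\tilde O(k\cdot n\tau)$'' accounting, your proof aligns with the paper's.
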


\begin{proof}
The result is an almost immediate consequence of our considerations above. Notice that the condition on the endpoints of the initial intervals $I_j$ guarantees that we only evaluate the sign of $p$ at points that are either admissible points $m^*\in\multipoint{m}{\delta}$ or endpoints of one of the intervals $I_j$. Hence, each such sign evaluation needs $\tilde{O}(k\cdot n\tau)$ bit operation. For the computation of an admissible point, we need $\tilde{O}(k^2\cdot n\tau)$ many bit operations, which is due to the fact that we perform approximate computation of $p$ at $O(k)$ many points in parallel. From Theorem~\ref{thm:arithmeticcomplexity}, we conclude that the number of iterations in total is bounded by $O(k\cdot \log (n\tau))$, and thus, the claimed bound on the bit complexity follows. 
\end{proof}

For our root isolation algorithm as proposed in Section~\ref{sec:algorithm}, the above result implies that we can isolate all 
real roots of $p$ with a number of bit operations bounded by $\tilde{O}(k^4\cdot n(\tau+k))$. Namely, in each step of the 
recursion, we first have to evaluate some $k$-nomial $p_{j-1}$ of magnitude $(n,\tau+k\log n)$ at arbitrary points 
$x_i\in I_{j,i}$, where $I_{j,i}=(a_{j,i},b_{j,i})$ are isolating intervals for the real roots of $p_j$. Since it suffices to 
compute approximations of the values $p_{j-1}(x_i)$ to $L/2$ bits after the binary point, the cost for all evaluations is 
bounded by $\tilde{O}(k^2\cdot n\tau)$ bit operations. In a second step, we have to refine all isolating intervals $I_{j,i}'$ 
for the simple real roots of $p_{j}$ to a width less than $2^{-L}$, with $L=128n(\log n+\tau)$. Each endpoint $e$ of an 
arbitrary $I_{j,i}$ is an endpoint of one of the intervals $I_{j,i}$, that is, $e=a_{j,i}$ or $e=b_{j,i}$ for some $i$. Hence, 
by induction, it follows that $p(e)\ge 2^{-L}$. Then, from Lemma~\ref{refinement:bitcomplexity}, we conclude that refining all 
intervals to a width less than $2^{-L}$ needs $\tilde{O}(k^4\cdot n(\tau+k))$ bit operations.

\begin{theorem}\label{maintheorem2}
Let $p\in\Z[x]$ be a $k$-nomial of magnitude $(n,\tau)$. Then, computing isolating intervals with rational endpoints for all real roots of $p$ needs $\tilde{O}(k^3\cdot n\tau)$ bit operations. For $k=O(\log (n\tau)^C)$, with $C$ a fixed positive constant, the latter bound becomes $\tilde{O}(n\tau)$, which is optimal up to logarithmic factors. 
\end{theorem}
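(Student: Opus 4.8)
The plan is to assemble the upper bound from the pieces already established and then handle the lower bound separately. For the upper bound, recall from the discussion preceding the theorem that one pass of the recursion from $p_j$ to $p_{j-1}$ costs $\tilde O(k^2\cdot n\tau)$ bit operations for the sign evaluations of $p_{j-1}$ at the roots of $p_j$ (each evaluation to $L/2$ bits, $L=128n(\log n+\tau)$, via the approach of~\cite{qir-kerber-11} plus repeated squaring, at $O(k)$ points), plus the refinement cost. The key point is that the endpoints of every isolating interval ever produced are, by induction, admissible points or endpoints carried over from a previous level, so they satisfy $|p_{j-1}(\cdot)|>2^{-L}$; this is exactly the hypothesis of Lemma~\ref{refinement:bitcomplexity}, which then gives $\tilde O(k^3\cdot n\tau)$ bit operations for refining all intervals of one level. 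Summing over the $k-1$ levels of the recursion and over the symmetric treatment of the negative roots (apply everything to $p(-x)$), the total is $\tilde O(k^4\cdot n(\tau+k))=\tilde O(k^3\cdot n\tau)$ after absorbing the $k$-factors and the $k\log n$ growth of the bit-size of the intermediate $p_j$ into the $\tilde O(\cdot)$ for fixed-in-$\log$ parameters; I would state this carefully so that the exponent of $k$ matches the claim. Finally, verifying termination/correctness of the whole scheme reduces to Theorem~\ref{evalbound} (whose hypotheses hold since $L\ge 128\max_j\deg(p_j)\cdot(\log\|p_j\|_\infty+\log n)$) together with the monotonicity of $p_{j-1}$ between consecutive roots of $p_j$ and Rolle's theorem, both already spelled out in Section~\ref{sec:algorithm}.

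For the specialization, if $k=O(\log^C(n\tau))$ then $k^3=O(\log^{3C}(n\tau))$ is absorbed into $\tilde O(\cdot)$, so the bound collapses to $\tilde O(n\tau)$; this is immediate.

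For the optimality (lower bound), the plan is to exhibit a family of $4$-nomials whose real root isolation provably requires $\Omega(n\tau)$ bit operations, because merely writing down a correct isolating interval already needs $\Omega(n\tau)$ bits. I would use a polynomial of the shape $p(x)=x^n - a\,x^m + b$ (or a similarly structured tetranomial) with $a,b$ of bit-size $\Theta(\tau)$ chosen so that $p$ has a real root $\xi$ that is extremely close to another real root, or extremely close to a rational value, forcing the separation (or the distance to the nearest dyadic of small height) to be $2^{-\Omega(n\tau)}$; by Mahler/Davenport-type separation-bound considerations run in reverse, such configurations exist. Any isolating interval $(a,b)$ with rational endpoints must then have $b-a=2^{-\Omega(n\tau)}$, hence $a$ (and $b$) has binary representation of length $\Omega(n\tau)$, so any algorithm that outputs it must perform $\Omega(n\tau)$ bit operations. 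Combining with the upper bound gives optimality up to logarithmic factors.

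The main obstacle I anticipate is twofold. First, on the upper-bound side, bookkeeping the exact power of $k$: the refinement lemma already carries a $k^3$, each recursive level adds another factor from recomputing $p_j$ and from the $O(k)$ evaluation points, and the magnitude of $p_j$ grows like $(n,\tau+k\log n)$; one must check that these interact to give $\tilde O(k^3 n\tau)$ rather than a larger power, which I expect works precisely because the $k\log n$ additive growth in $\tau$ and the extra $k$-factors only contribute logarithmically once $k$ itself is inside $\tilde O(\cdot)$ in the relevant regime, but stated for general $k$ it is $\tilde O(k^4 n(\tau+k))$ and one must reconcile the two forms. Second, on the lower-bound side, the real work is producing an explicit tetranomial where one can \emph{prove} the root is as close as claimed (not just that the generic separation bound permits it); the cleanest route is an explicit construction, e.g.\ perturbing $x^n-2^\tau$ or using a Chebyshev-like or Mignotte-like polynomial $x^n - 2(2^{\tau/2}x - 1)^2$ that has a pair of roots at distance $\approx 2^{-\tau n/2}$, which I would verify by a direct estimate rather than by invoking a generic bound.
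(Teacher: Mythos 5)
Your upper-bound plan is essentially the paper's, including the correct observation that the endpoints propagate as admissible points so that Lemma~\ref{refinement:bitcomplexity} can be applied at every level of the recursion; the mild discrepancy between $\tilde O(k^4\, n(\tau+k))$ and $\tilde O(k^3\, n\tau)$ is present in the paper's own text as well and is immaterial once $k=O(\log^C(n\tau))$.

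The lower bound, however, has a genuine gap. You argue that because some tetranomial has two real roots $x_1<x_2$ at distance $2^{-\Omega(n\tau)}$, any rational endpoint $b_1$ of an interval isolating $x_1$ must lie in $(x_1,x_2]$ and ``hence has binary representation of length $\Omega(n\tau)$.'' That implication is false in general: small root separation does \emph{not} by itself force the separating endpoint to have large height. Indeed, your proposed polynomial $x^n-2(2^{\tau/2}x-1)^2$ has its root pair clustered around the point $2^{-\tau/2}$, which is a dyadic rational of bit-size $O(\tau)$; one may take $b_1:=2^{-\tau/2}$ (which lies strictly between the two roots, since $p(2^{-\tau/2})=2^{-n\tau/2}>0$), so the isolating interval has an endpoint of height $O(\tau)$ and the claimed lower bound on output size evaporates. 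What is actually needed is that \emph{no rational of small height separates the two roots}, i.e.\ the cluster center must be badly approximable by low-height rationals. The paper secures this by taking $p(x)=x^n-(2^{2\tau}x^2-a)^2$ with $a$ a small non-square integer, so the two roots are within $2^{-\Omega(n\tau)}$ of the \emph{irrational} $\sqrt a\cdot 2^{-\tau}$, and then invokes a Liouville-type inequality (Lemma~\ref{rationalbound}, proved via the continued-fraction expansion of $\sqrt a$) to conclude that any rational within $2^{-\Omega(n\tau)}$ of $\sqrt a\cdot 2^{-\tau}$ has denominator of $\Omega(n\tau)$ bits. This diophantine ingredient is absent from your argument and is the crux of the lower bound; the Rouch\'e step, which you would also need to verify the closeness of the two roots, is comparatively routine and matches what you sketch.
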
 

\begin{proof}
It remains to prove the last claim. For this, consider the polynomial 
$
p(x)=x^n-(2^{2\tau}\cdot x^2-a)^2,
$
where $a>1$ is a fixed constant integer, and $n,\tau\in\N_{\ge 8}$. Then, $p$ is a $4$-nomial of magnitude $(n,O(\tau))$, and $p$ has two positive roots $x_1$ and $x_2$, with $x_1<x_2$ and $|x_i-\sqrt{a}\cdot 2^{-\tau}|<2^{-\Omega(n\tau)}$ for $i=1,2$. Namely, let $f(x):=(2^{2\tau}\cdot x^2-a)^2$ be a polynomial that has two roots of multiplicity $2$ at $x=\pm\sqrt{a}\cdot 2^{-\tau}$, and let $g(x):=x^n$. Then, a simple computation shows that $|f(z)|>|g(z)|$ for all points $z$ on the boundary of the disk $\Delta\subset\C$ of radius $\epsilon:=2^{-(n-2)(\tau-2)}\cdot a^{n/2-1}$ centered at $\sqrt{a}\cdot 2^{-\tau}$. Hence, Rouch\'e's Theorem implies that $f$ and $p=g-f$ have the same number of roots in $\Delta$. In addition, both roots are real. 

We conclude that, for any isolating interval $I_1=(a_1,b_1)$ for $x_1$, we must have $|b_1-\sqrt{a}\cdot 2^{-\tau}|<2^{-\Omega(n\tau)}$. Now, let $b_1=p/q$ with co-prime integers $p$ and $q$, then we must have $q=\Omega(n\tau)$; see Lemma~\ref{rationalbound} in the Appendix. Hence, the binary representation of the endpoints of $I_1$ already needs $\Omega(n\tau)$ bits, which proves our claim.
\end{proof}

\section{Conclusion}

In this paper, we give the  first algorithm that computes the real roots of a sparse polynomial $p\in\Z[x]$ in a number of arithmetic operations over $\Q$ that is polynomial in the input size of the sparse representation of $p$. In addition, for sparse-enough polynomials, the algorithm achieves a near-optimal bound for the bit complexity of the problem of isolating all real roots. The main ingredients of our algorithm are evaluation and separation bounds as well as an efficient method for refining an isolating interval of a simple real root. So far, our algorithm has been formulated in the easiest possible way with the prior goal to achieve good theoretical complexity bounds, however, for the price of a probably worse efficiency in practice. Hence, our first research goal is to provide an efficient implementation of our algorithm that integrates additional steps in order to improve its practical efficiency. Our second research goal is to extend our algorithm to (square-free) polynomials with arbitrary real coefficients. For this, it seems reasonable to combine our algorithm with the root isolation method from~\cite{DBLP:journals/corr/SagraloffM13}. Hopefully, this allows us to derive improved (bit) complexity bounds for sparse polynomials that can be stated in terms of the geometry of the roots (similar to the bounds as provided in~\cite[Theorem~31]{DBLP:journals/corr/SagraloffM13} or~\cite[Theorem~3]{MSW-rootfinding2013}) rather than in terms of the input size. For polynomials $p\in\R[x]$ that may have multiple real roots, the situation becomes more complicated. Namely, since no a-priori separation bound is known to decide whether a certain root has multiplicity larger than one, we cannot distinguish between a real root of multiplicity $m>1$ and a cluster of $m$ (not necessarily real) roots. Hence, it remains an open research question whether the computation of a (reasonable good) separation bound has polynomial arithmetic complexity.

{\small
\bibliography{localref}
\bibliographystyle{abbrv}}
\newpage
\section{Appendix}
\ignore{
\begin{theorem}\label{appendix:evalbound}
Let $f$ and $g$ be polynomials of degree $n$ or less with integer coefficients of absolute values less than $2^{\tau}$, and let 
\begin{align}
L:=128n(\log n+\tau).
\end{align}
Then, the minimal distance between any two distinct roots of $F:=f\cdot g$ is lower bounded by $2^{-L}$. If $\xi$ is a root of $g$ and $f(\xi)\neq 0$, then $|f(x)|>2^{-L/4}$ for all $x\in\C$ with $|x-\xi|<2^{-L}$. Vice versa, if $f(\xi)=0$, then $|f(x)|<2^{-L}$ for all $x\in\C$ with $|x-\xi|<2^{-L}$.
\end{theorem}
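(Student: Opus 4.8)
The plan is to reduce all three claims to a single lower bound on the quantity
$\prod_i \min\!\bigl(1,\,|F^{(m_i)}(\xi_i)|/(|F_N|\,m_i!)\bigr)$, where $F:=f\cdot g = F_N\prod_l(x-\xi_l)^{m_l}$ and the product runs over the distinct complex roots $\xi_i$ of $F$ with multiplicities $m_i$, and then to translate this product bound into the stated separation and evaluation estimates. The point that needs care — and the reason the final $L$ comes out as $O(n(\log n+\mu))$ rather than the $\tilde O(n^2+n\mu)$ that is available off the shelf — is that I must \emph{not} pass to the square-free part of $F$, whose coefficients would blow up to bit-size $O(n+\mu)$; instead I apply the bound of \cite[Theorem~5]{MSW-rootfinding2013} directly to $F$, which has degree $N\le 2n$ and integer coefficients of bit-size $\tau_F<2(\mu+\log n)$. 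Together with $\Mea(F)\le\|F\|_2\le\sqrt{N+1}\cdot 2^{\tau_F}$, a short calculation yields $\prod_i \min\!\bigl(1,\,|F^{(m_i)}(\xi_i)|/(|F_N|\,m_i!)\bigr) > 2^{-24n(\mu+\log n)}$, and in particular each single factor exceeds this quantity.

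For the separation claim I would use the factorization of $F$ to rewrite $|F^{(m_i)}(\xi_i)|/(|F_N|\,m_i!) = \prod_{l\neq i}|\xi_i-\xi_l|^{m_l}$ (the factor $(x-\xi_i)^{m_i}$ contributes exactly $m_i!$ after $m_i$ differentiations at $\xi_i$, the remaining factors evaluate directly). Splitting off the term corresponding to $\xi_j$ and bounding the tail crudely via the Cauchy root bound $|\xi_l|<1+2^\mu$ — so $|\xi_i-\xi_l|<2^{\mu+2}$ and $\prod_{l\neq i,j}|\xi_i-\xi_l|^{m_l}<2^{2n(\mu+2)}$ since $\sum_l m_l=N\le 2n$ — gives $|\xi_i-\xi_j|^{m_j} > 2^{-24n(\mu+\log n)-2n(\mu+2)} > 2^{-L}$; since $i$ and $j$ are arbitrary distinct roots, relabelling yields the stated exponent $m_i$.

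For the evaluation bounds, suppose $\xi=\xi_i$ is a root of $g$ with $f(\xi)\neq 0$, so $\xi$ is a root of $F$ of the same multiplicity $m_i$ it has for $g$; writing $g(x)=(x-\xi)^{m_i}h(x)$ with $h(\xi)\neq 0$ gives $F^{(m_i)}(\xi)=m_i!\,f(\xi)h(\xi)$ and $g^{(m_i)}(\xi)=m_i!\,h(\xi)$, hence $|f(\xi)| = |F^{(m_i)}(\xi_i)|/|g^{(m_i)}(\xi_i)|$. The numerator is $\ge 2^{-24n(\mu+\log n)}\cdot|F_N|\,m_i!$ and the denominator is at most $2^{O(n(\mu+\log n))}$ (a trivial bound on a derivative of $g$ at a point of modulus $<2^{\mu+1}$), so $|f(\xi)| > 2^{-c\,n(\mu+\log n)}$ for an explicit constant $c\ll 128$. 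A first-order Taylor estimate $|f(x)|\ge|f(\xi)| - |x-\xi|\cdot\sum_{j\ge 1}|f^{(j)}(\xi)|/j!$, with $\sum_j|f^{(j)}(\xi)|/j!\le 2^{O(n(\mu+\log n))}$ and $|x-\xi|<2^{-L}$, then keeps $|f(x)|$ above $2^{-L/4}$ because $L=128n(\mu+\log n)$ dominates $c\,n(\mu+\log n)$ with room to spare. The converse, when $f(\xi)=0$, is immediate from the same Taylor expansion with its (now absent) constant term: $|f(x)|\le|x-\xi|\cdot 2^{O(n(\mu+\log n))} < 2^{-L}$ whenever $|x-\xi|<2^{-L}$.

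I expect the only real obstacle to be bookkeeping: keeping every explicit constant appearing in the exponents under control so that they stay comfortably below the factor $128$ in the definition of $L$, and handling the multiplicities $m_i$ consistently across the factorization of $F$ and the separate factorizations of $f$ and $g$. Once the constant in the product bound is pinned down, each subsequent step is a single loose inequality, so no genuine difficulty remains.
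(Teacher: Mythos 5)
Your proposal is correct and takes essentially the same route as the paper's own appendix proof: apply the bound from \cite[Theorem~5]{MSW-rootfinding2013} directly to $F=f\cdot g$ (deliberately avoiding the square-free part to keep the bit-size at $\tau_F<2(\mu+\log n)$), then read the separation claim off the identity $|F^{(m_i)}(\xi_i)|/(|F_N|\,m_i!)=\prod_{l\neq i}|\xi_i-\xi_l|^{m_l}$ with a crude Cauchy-bound estimate of the tail, and the evaluation claims from $|f(\xi)|=|F^{(m_i)}(\xi)|/|g^{(m_i)}(\xi)|$ followed by a first-order Taylor estimate. Your constants even match the paper's ($2^{-24n(\mu+\log n)}$ for the product, $|f(\xi)|>2^{-28n(\mu+\log n)}$, final bound $2^{-L/4}$), so nothing further is needed.
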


\begin{proof}
For the proof, we mainly combine known results~\cite{MSW-rootfinding2013}, however, we aim to stress the fact that the following computations are necessary to derive an $L$ of size $O(n(\log n+\tau))$. Namely, the literature only provides comparable bounds for square-free polynomials, whereas, for arbitrary polynomials, the existing bounds are of size $\tilde{O}(n^2+n\tau)$. This is mainly due to the fact that the known bounds for square-free polynomials are directly applied to the square-free part, and, in general, the square-free part of an integer polynomial of magnitude $(n,\tau)$ is of magnitude $(n,O(n+\tau))$.

Let $F(x)=f(x)\cdot g(x)=F_N\cdot\prod_{j=1}^N (x-z_j)$, where $z_1,\ldots,z_N$ denote the complex roots of $F$. Then, $F$ has degree $N\le 2n$ and its coefficients are integers of absolute value $2^{\tau_F}$ with $\tau_F<2(\tau+\log n)$. Now, suppose that $F$ has exactly $r_0$, with $1\le r_0\le \deg F$, distinct complex roots $\xi_1$ to $\xi_{r_0}$ with multiplicities $m_1$ to $m_{r_0}$, respectively. From the proof of~\cite[Theorem 5]{MSW-rootfinding2013}, we conclude that
\[
\prod_{i=1}^{r_0}\min\left(1,\frac{|F^{(m_i)}(\xi_i)|}{|F_N|\cdot m_i!}\right)\ge \left(2^{3\tau_F+2\cdot\log N+1}\cdot \operatorname{Mea}(F)\right)^{-N},
\]
where $\operatorname{Mea}(F)=|F_N|\cdot\prod_{i=1}^{r_0}\max(1,|\xi_i|)^{m_i}$ denotes the Mahler Measure of $F$ and $F^{(m)}(x):=\frac{d^m F(x)}{dx^m}$ the $m$-th derivative of $F$. Since $\operatorname{Mea}(F)\le \|F\|_2\le\sqrt{N+1}\cdot 2^{\tau_F}$, a simple computation shows that
\begin{align}
\prod_{i=1}^{r_0}\min\left(1,\frac{|F^{(m_i)}(\xi_i)|}{|F_N|\cdot m_i!}\right)> 2^{-24n(\tau+\log n)}.\label{bound1}
\end{align}
Now, assume that $\xi=\xi_i$ is a root of $g$ and that $f(\xi)\neq 0$. Then, it follows that
\[
|f(\xi)|=\frac{|F^{(m_i)}(\xi_i)|}{|g^{(m_i)}(\xi_i)|}> \frac{2^{-24n(\tau+\log n)}}{(n+1)\cdot 2^\tau \cdot |\xi_i|^n}>2^{-28n(\tau+\log n)},
\]
where we used that $\xi_i$ is a root of $g$ of multiplicity $m_i$ and $|\xi_i|<2^{\tau+1}$ for all $i$. Hence, if $w:=|x-\xi|<2^{-L}$, then
\begin{align*}
|f(x)|&=\left|f(\xi)+\frac{f'(\xi)}{1!}\cdot w+\cdots+\frac{f^{(n)}(\xi)}{n!}\cdot w^n\right|\\
%&\ge |f(\xi)|-\left(\frac{|f'(\xi)|}{1!}\cdot w+\cdots+\frac{|f^{(n)}(\xi)|}{n!}\cdot w^n\right)\\
&\ge |f(\xi)|-w\cdot n^2\cdot 2^{\tau}\cdot 2^{n(\tau+1)}\ge 2^{-32(n(\tau+\log n))}.
\end{align*}
Vice versa, if we assume that $f(\xi)=0$, then $|f(x)|<w\cdot n^2\cdot 2^{n(\tau+1)}<2^{-64n(\tau+\log n)}$ for all $x$ with $|x-\xi|\le w\le 2^{-L}$. This proves the second claim. For the first claim, let $\xi_i$ and $\xi_j$ be any two distinct roots of $F$. Then, we conclude from (\ref{bound1}) that
\begin{align}\label{boundonsep2}
2^{-24n(\tau+\log n)}&<\frac{|F^{(m_i)}(\xi_i)|}{|F_N|\cdot m_i!}=\prod_{l\neq i}|\xi_i-\xi_l|^{m_l}\\
&= |\xi_i-\xi_j|^{m_j}\cdot \prod_{l\neq i,j}|\xi_i-\xi_l|^{m_l}\le |\xi_i-\xi_j|^{m_j}\cdot 2^{2N(\tau_F+1)},\nonumber
\end{align}
and thus, the first claim follows.
\end{proof}
}

\begin{theorem}\label{appendix:evalbound}
Let $f$ and $g$ be polynomials of degree $n$ or less with integer coefficients of absolute values less than $2^{\mu}$, and let 
\begin{align}
L:=128\cdot n\cdot (\log n+\mu).
\end{align}
Then, for any two distinct roots $\xi_i$ and $\xi_j$ of $F:=f\cdot g$, it holds that $|\xi_i-\xi_j|^{m_i}>2^{-L}$, where $m_i:=\operatorname{mult}(\xi_i,F)$ denotes the multiplicity of $\xi_i$ as a root of $F$. If $\xi$ is a root of $g$ and $f(\xi)\neq 0$, then it holds that $|f(x)|>2^{-L/4}$ for all $x\in\C$ with $|x-\xi|<2^{-L}$. Vice versa, if $f(\xi)=0$, then $|f(x)|<2^{-L}$ for all $x\in\C$ with $|x-\xi|<2^{-L}$.
\end{theorem}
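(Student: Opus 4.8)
The plan is to derive all three assertions from a single lower bound on
\[
\Pi(F):=\prod_{i}\min\!\left(1,\frac{|F^{(m_i)}(\xi_i)|}{|F_N|\cdot m_i!}\right),
\]
where $F:=f\cdot g=F_N\prod_i(x-\xi_i)^{m_i}$, with $F_N\in\Z\setminus\{0\}$ its leading coefficient, $\xi_i$ its distinct complex roots, and $m_i=\operatorname{mult}(\xi_i,F)$. First I would record the coarse data of $F$: it has degree $N\le 2n$ and integer coefficients of absolute value at most $(n+1)2^{2\mu}$, so $\tau_F:=\log\|F\|_\infty<2(\mu+\log n)$, and, by Cauchy's bound, $|\xi_i|<2^{\mu+1}$ for all $i$. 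The crucial non-routine ingredient is the inequality $\Pi(F)\ge\bigl(2^{3\tau_F+2\log N+1}\cdot\Mea(F)\bigr)^{-N}$ from the proof of~\cite[Theorem~5]{MSW-rootfinding2013}; feeding in $\Mea(F)\le\|F\|_2\le\sqrt{N+1}\,2^{\tau_F}$ together with $N\le 2n$ and $\tau_F<2(\mu+\log n)$, a direct estimate gives $\Pi(F)>2^{-24n(\mu+\log n)}$. Since every factor of $\Pi(F)$ lies in $(0,1]$, each individual factor also exceeds $2^{-24n(\mu+\log n)}$.

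For the separation claim I would use that $\xi_i$ is a root of $F$ of multiplicity exactly $m_i$, so writing $F(x)=(x-\xi_i)^{m_i}H(x)$ and differentiating $m_i$ times gives $F^{(m_i)}(\xi_i)=m_i!\,F_N\prod_{l\ne i}(\xi_i-\xi_l)^{m_l}$; hence the $i$-th factor of $\Pi(F)$ equals $\prod_{l\ne i}|\xi_i-\xi_l|^{m_l}$. Splitting off the $l=j$ term and bounding the remaining product from above by $\prod_l\max(1,|\xi_i-\xi_l|)^{m_l}\le 2^{(\mu+2)N}\le 2^{2N(\tau_F+1)}$ yields $|\xi_i-\xi_j|^{m_j}>2^{-24n(\mu+\log n)-2N(\tau_F+1)}$, and with $N\le 2n$ and $\tau_F<2(\mu+\log n)$ the exponent is bounded in magnitude by $L=128n(\log n+\mu)$; running the same argument with $i$ and $j$ interchanged gives the stated bound $|\xi_i-\xi_j|^{m_i}>2^{-L}$.

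For the evaluation claims, suppose first that $\xi=\xi_i$ is a root of $g$ with $f(\xi)\ne0$. Then $m_i=\operatorname{mult}(\xi_i,g)$, so all derivatives of $g$ of order $<m_i$ vanish at $\xi_i$ and Leibniz' rule collapses to $F^{(m_i)}(\xi_i)=f(\xi_i)\,g^{(m_i)}(\xi_i)$; combining $|F^{(m_i)}(\xi_i)|>2^{-24n(\mu+\log n)}\,|F_N|\,m_i!\ge 2^{-24n(\mu+\log n)}$ with the trivial bound $|g^{(m_i)}(\xi_i)|\le(n+1)\,n^{m_i}\,2^\mu\,|\xi_i|^n<2^{4n(\mu+\log n)}$ gives $|f(\xi)|>2^{-28n(\mu+\log n)}$. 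A Taylor expansion of $f$ about $\xi$, using the crude bound $\sum_{j\ge1}|f^{(j)}(\xi)|/j!<2^{4n(\mu+\log n)}$, shows $|f(x)-f(\xi)|\le w\cdot2^{4n(\mu+\log n)}$ with $w:=|x-\xi|$; for $w<2^{-L}$ this is negligible against $|f(\xi)|$, so $|f(x)|>2^{-L/4}$. Symmetrically, if $f(\xi)=0$ the same Taylor estimate leaves only the remainder, giving $|f(x)|<2^{-L}$ for $w<2^{-L}$. The one genuinely delicate point is the very first step: obtaining the Mahler-measure bound with an exponent \emph{linear} in $n$ rather than quadratic — this is exactly why one works with $F$ and its higher derivatives directly, instead of passing to the square-free part of $F$, whose bit-size would in general be $\Theta(n+\mu)$ and would cost an extra factor of $n$.
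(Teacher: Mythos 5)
Your proof follows the paper's own argument essentially step for step: the same lower bound on the product $\Pi(F)$ via~\cite[Theorem~5]{MSW-rootfinding2013}, the same Mahler-measure estimate yielding $\Pi(F)>2^{-24n(\mu+\log n)}$, the same Leibniz collapse $F^{(m_i)}(\xi_i)=f(\xi_i)\,g^{(m_i)}(\xi_i)$, and the same Taylor-expansion perturbation bound, and you even anticipate the paper's own remark that working directly with $F$ rather than its square-free part is what keeps the exponent linear in $n$. The one small point where you are sharper than the paper is noticing that the computation naturally yields $|\xi_i-\xi_j|^{m_j}>2^{-L}$ and that one must swap the roles of $i$ and $j$ to obtain the stated $|\xi_i-\xi_j|^{m_i}>2^{-L}$, a step the paper leaves implicit.
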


\begin{proof}
For the proof, we mainly combine known results~\cite{MSW-rootfinding2013}, however, we aim to stress the fact that the following computations are necessary to derive an $L$ of size $O(n(\log n+\tau))$. Namely, the literature only provides comparable bounds for square-free polynomials, whereas, for arbitrary polynomials, the existing bounds are of size $\tilde{O}(n^2+n\mu)$. This is mainly due to the fact that the known bounds for square-free polynomials are directly applied to the square-free part, and, in general, the square-free part of an integer polynomial of magnitude $(n,\mu)$ is of magnitude $(n,O(n+\mu))$.

Let $F(x)=f(x)\cdot g(x)=F_N\cdot\prod_{j=1}^N (x-z_j)$, where $z_1,\ldots,z_N$ denote the complex roots of $F$. Then, $F$ has degree $N\le 2n$ and its coefficients are integers of absolute value $2^{\tau_F}$ with $\tau_F<2(\mu+\log n)$. Now, suppose that $F$ has exactly $r_0$, with $1\le r_0\le \deg F$, distinct complex roots $\xi_1$ to $\xi_{r_0}$ with multiplicities $m_1$ to $m_{r_0}$, respectively. From the proof of~\cite[Theorem 5]{MSW-rootfinding2013}, we conclude that
\[
\prod_{i=1}^{r_0}\min\left(1,\frac{|F^{(m_i)}(\xi_i)|}{|F_N|\cdot m_i!}\right)\ge \left(2^{3\tau_F+2\cdot\log N+1}\cdot \operatorname{Mea}(F)\right)^{-N},
\]
where $\operatorname{Mea}(F)=|F_N|\cdot\prod_{i=1}^{r_0}\max(1,|\xi_i|)^{m_i}$ denotes the Mahler Measure of $F$ and $F^{(m)}(x):=\frac{d^m F(x)}{dx^m}$ the $m$-th derivative of $F$. Since $\operatorname{Mea}(F)\le \|F\|_2\le\sqrt{N+1}\cdot 2^{\tau_F}$, a simple computation shows that
\begin{align}
\prod_{i=1}^{r_0}\min\left(1,\frac{|F^{(m_i)}(\xi_i)|}{|F_N|\cdot m_i!}\right)> 2^{-24n(\mu+\log n)}.\label{bound1}
\end{align}
Now, assume that $\xi=\xi_i$ is a root of $g$ and that $f(\xi)\neq 0$. Then, it follows that
\[
|f(\xi)|=\frac{|F^{(m_i)}(\xi_i)|}{|g^{(m_i)}(\xi_i)|}> \frac{2^{-24n(\mu+\log n)}}{(n+1)\cdot 2^\tau \cdot |\xi_i|^n}>2^{-28n(\mu+\log n)},
\]
where we used that $\xi_i$ is a root of $g$ of multiplicity $m_i$ and $|\xi_i|<2^{\mu+1}$ for all $i$. Hence, if $w:=|x-\xi|<2^{-L}$, then
\begin{align*}
|f(x)|&=\left|f(\xi)+\frac{f'(\xi)}{1!}\cdot w+\cdots+\frac{f^{(n)}(\xi)}{n!}\cdot w^n\right|\\
%&\ge |f(\xi)|-\left(\frac{|f'(\xi)|}{1!}\cdot w+\cdots+\frac{|f^{(n)}(\xi)|}{n!}\cdot w^n\right)\\
&\ge |f(\xi)|-w\cdot n^2\cdot 2^{\mu}\cdot 2^{n(\mu+1)}\ge 2^{-32(n(\mu+\log n))}.
\end{align*}
Vice versa, if we assume that $f(\xi)=0$, then $|f(x)|<w\cdot n^2\cdot 2^{n(\mu+1)}<2^{-64n(\mu+\log n)}$ for all $x$ with $|x-\xi|\le w\le 2^{-L}$. This proves the second claim. For the first claim, let $\xi_i$ and $\xi_j$ be any two distinct roots of $F$. Then, we conclude from (\ref{bound1}) that
\begin{align}\label{boundonsep2}
2^{-24n(\mu+\log n)}&<\frac{|F^{(m_i)}(\xi_i)|}{|F_N|\cdot m_i!}=\prod_{l\neq i}|\xi_i-\xi_l|^{m_l}\\
&= |\xi_i-\xi_j|^{m_j}\cdot \prod_{l\neq i,j}|\xi_i-\xi_l|^{m_l}\le |\xi_i-\xi_j|^{m_j}\cdot 2^{2N(\tau_F+1)},\nonumber
\end{align}
and thus, the first claim follows.
\end{proof}

\begin{lemma}\label{rationalbound}
Let $a\in\Z$ be a positive integer such that $\sqrt{a}\notin\Z$, and let $p$ and $q$ be arbitrary integers. Then, $|\sqrt{a}-\frac{p}{q}|>\frac{1}{32a^2\cdot q^2}$.
\end{lemma}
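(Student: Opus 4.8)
The plan is to prove the rational approximation bound $|\sqrt{a} - p/q| > \frac{1}{32a^2 q^2}$ for a positive integer $a$ with $\sqrt{a}\notin\Z$. The natural approach is the classical trick for irrationality measures of quadratic surds: multiply and divide by the conjugate factor $\sqrt{a} + p/q$ to turn the difference into something with an integer numerator. First I would assume $q > 0$ without loss of generality, and dispose of the trivial case where $|\sqrt{a} - p/q| \ge 1$ (which is certainly $> \frac{1}{32 a^2 q^2}$ since $a, q \ge 1$), so that henceforth we may assume $p/q$ is within distance $1$ of $\sqrt{a}$, and in particular $|p/q| \le \sqrt{a} + 1$ and $|p| \le (\sqrt{a}+1) q$.

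The key computation is then
\begin{align*}
\left|\sqrt{a} - \frac{p}{q}\right| = \frac{\left|a - p^2/q^2\right|}{\sqrt{a} + p/q} = \frac{|aq^2 - p^2|}{q^2\,(\sqrt a + p/q)}.
\end{align*}
Here the numerator $|aq^2 - p^2|$ is a nonnegative integer, and it is nonzero: if it were zero then $a q^2 = p^2$, forcing $\sqrt{a} = |p|/q \in \Q$, and since $a$ is an integer this would make $\sqrt a$ an integer (a rational square root of an integer is an integer), contradicting the hypothesis $\sqrt a \notin \Z$. Hence $|aq^2 - p^2| \ge 1$. For the denominator, $\sqrt a + p/q \le \sqrt a + (\sqrt a + 1) \le 3\sqrt a$ (using $a \ge 1$, hence $1 \le \sqrt a$), and $3\sqrt a \le 32 a^2$ comfortably. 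Therefore
\begin{align*}
\left|\sqrt{a} - \frac{p}{q}\right| \ge \frac{1}{q^2 \cdot 3\sqrt a} > \frac{1}{32 a^2 q^2},
\end{align*}
which is the claim. One should also handle the sign of $\sqrt a + p/q$: in the non-trivial case $p/q$ is close to $\sqrt a > 0$, so $\sqrt a + p/q$ is positive and the absolute-value manipulation above is valid; if $p \le 0$ then $|\sqrt a - p/q| = \sqrt a + |p|/q \ge \sqrt a \ge 1$ and we are again in the trivial case.

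I do not expect a genuine obstacle here — the argument is elementary. The only points requiring a little care are (i) justifying that $|aq^2 - p^2| \ge 1$ via the fact that an integer with a rational square root has an integral square root, and (ii) making sure the crude bound $\sqrt a + p/q \le 3\sqrt a$ holds in the regime we have reduced to, and that $3\sqrt a < 32 a^2$ for all $a \ge 1$. The constant $32 a^2$ in the statement is extremely generous, so no sharpness issues arise; essentially any elementary bounding of the conjugate factor suffices. The slightly delicate bookkeeping is just the case split on whether $p$ is positive and whether $p/q$ lands within distance $1$ of $\sqrt a$, which is why I would structure the write-up around first reducing to the "close" case and then running the conjugate-multiplication identity.
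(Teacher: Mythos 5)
Your proof is correct, and it takes a genuinely different route from the paper. The paper proves the bound via the continued fraction expansion of $\sqrt{a}$: it uses the fact that this expansion is periodic with partial quotients $a_i < 2\sqrt{a}$, invokes the classical estimate $\left|\sqrt{a} - p_n/q_n\right| > \frac{1}{2\,q_n q_{n+1}}$ for convergents, bounds $q_{n_0}$ and $q_{n_0+1}$ in terms of $q$ using the recursion $q_n = a_n q_{n-1} + q_{n-2}$, and finally appeals to the best-approximation property of convergents. Your argument instead uses the elementary conjugate trick: after disposing of the trivial case $\left|\sqrt{a} - p/q\right| \ge 1$, the identity
\begin{align*}
\left|\sqrt{a} - \frac{p}{q}\right| \;=\; \frac{|aq^2 - p^2|}{q^2\left(\sqrt{a} + p/q\right)}
\end{align*}
combined with $|aq^2 - p^2| \ge 1$ (since $\sqrt{a}\notin\Q$, as $a\in\Z$ and $\sqrt{a}\notin\Z$) and the crude upper bound $\sqrt{a} + p/q \le 3\sqrt{a}$ in the close regime gives the claim immediately. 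This is the standard effective Liouville estimate for quadratic irrationals; it needs no facts about continued fractions at all. It also yields the sharper constant $\frac{1}{3\sqrt{a}\,q^2}$ in the non-trivial case (versus the paper's $\frac{1}{32\,a\sqrt{a}\,q^2}$), though both comfortably establish the stated $\frac{1}{32\,a^2 q^2}$. In short, your route is simpler, more self-contained, and sharper; the paper's route is more machinery for no gain here. The only bookkeeping points you correctly flag — WLOG $q>0$, handling the sign of $\sqrt{a}+p/q$, and the case split on closeness to $\sqrt{a}$ — are all handled properly in your sketch.
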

\begin{proof}
We consider the continued fraction expansion of $\sqrt{a}$:
\[
\sqrt{a}=a_0+\frac{1}{a_1+\frac{1}{a_2+\frac{1}{\cdots}}},
\]
with non-negative integers $a_0,a_1,\ldots$. For short, we write $\sqrt{a}=[a_0;a_1,a_2,\ldots]$. Abbreviating the continued fraction expansion at $a_n$ yields the corresponding approximation $\frac{p_n}{q_n}=[a_0;a_1,\ldots,a_n]$, where $p_n$ and $q_n$ are co-prime integers. The recursive relation between the denominators $q_n$ and the values $a_n$ is given as follows:
\[
q_n=a_n\cdot q_{n-1}+q_{n-2}.
\]
Furthermore, it holds that the sequence $(a_1,a_2,\ldots)$ is periodic\footnote{More precisely, there exists a $k\in \N$, with $k\le 2a$ such that $(a_1,a_2,\ldots,a_{k})=(a_1,a_2,a_3\ldots a_3,a_2,a_1,2a_0)$ and $a_{k+i}=a_i$ for all $i\in\N_{\ge 1}$.}, and each value $a_i$ is smaller than $2\sqrt{a}$; see~\cite{cohn1977length}. Since the denominators $q_n$ are monotonously increasing, there exists an $n_0$ with $q_{n_0-1}<q\le q_{n_0}$. Then, from the above recursion, we conclude that $q_{n_0}<2\sqrt{a}\cdot q_{n_0-1}+q_{n_0-2}<4\sqrt{a}\cdot q$ and $q_{n_0+1}=a_{n_0+1}\cdot q_{n_0}+q_{n_0-1}<16\cdot a\cdot q$.
According to~\cite[I, \S 2, Theorem~5]{lang1995introduction}, we have
$$\left|\sqrt{a}-\frac{p_n}{q_n}\right|>\frac{1}{2\cdot q_n\cdot q_{n+1}}\quad\text{ for all }n,$$
and thus, 
$$|\sqrt{a}-\frac{p_{n_0}}{q_{n_0}}|>\frac{1}{32a\sqrt{a}\cdot q^2}>\frac{1}{32a^2\cdot q}.$$
Our claim now follows from the fact that, for all $n\ge 1$, the continued fraction approximation $p_n/q_n$ minimizes the distance between $\sqrt{a}$ and any rational value $p/q$ with $q\le q_n$.
\end{proof}   
\newpage

\end{document}